\documentclass[12pt, twoside, leqno]{article}
\usepackage{hyperref}

% Modif. February 26, 2020
% In case of any problems, send comments to publ@impan.pl

% Using pdflatex is preferred

\usepackage{amsmath,amsthm}
\usepackage{amssymb}

%% Optional, but useful:
\usepackage{enumitem}

%% Add only when there are figures:
\usepackage{graphicx}

%% If you are using letters of the Polish alphabet, add 
\usepackage[T1]{fontenc}
%% E.g. the name "Zoladz" is then coded \.Zo{\l}\k{a}d\'z

\usepackage{amsfonts}
\usepackage{float}
\usepackage{cancel}
\usepackage{array}
%% In the running head, replace first names by initials 
%% and give an abbreviation of the title.

\pagestyle{myheadings}
\markboth{Yagub N. Aliyev}{The extremal values of the ratio of differences of means}

%%%%%

%% Numbered objects of "theorem" style (text italicized).
%% Below, the optional parameters indicate that all objects are numbered together, and "by section".
%% However, you are welcome to use any other numbering system of your choice, as well as your own abbreviations.

\newtheorem{theorem}{Theorem}[section]

\newtheorem{lemma}[theorem]{Lemma}

%% A numbered theorem with a fancy name:

%% Numbered objects of "non-theorem" style (text roman):

\theoremstyle{definition}

%% An unnumbered object:

%% Equations numbered by section (optional):

\numberwithin{equation}{section}

%%%%%%%%%%% For IMPAN journals:

\frenchspacing

\textwidth=13.5cm
\textheight=23cm
\parindent=16pt
\oddsidemargin=-0.5cm
\evensidemargin=-0.5cm
\topmargin=-0.5cm

%%%%%%%%%%%%%%%%%%%%%%%%%%%%%%%%%%%
%%%%%%%%%%%%%%%%%%%%%%%%%%%%%%%%%%%

%%%% Put your macros here:

%%%% Here are two examples:

%%%%%%%%%%%%%

\begin{document}

%%%%% To ease editing, for IMPAN journals add:

\baselineskip=17pt

%%%%%%%%%%%%%%%%

\title{The extremal values of the ratio of differences of power mean, arithmetic mean, and geometric mean}

\author{Yagub N. Aliyev\\
School of IT and Engineering\\ 
ADA University\\
Ahmadbey Aghaoglu str. 61 \\
Baku 1008, Azerbaijan\\
E-mail: yaliyev@ada.edu.az}
\date{}

\maketitle

%% Classification and key words; note that the 2010 classification is used:

\renewcommand{\thefootnote}{}

\footnote{2020 \emph{Mathematics Subject Classification}: Primary 26E60, 26D05; Secondary 26A24, 26D10, 34A40, 42A05, 51M16.}

\footnote{\emph{Key words and phrases}: best constant, inequalities, interpolation, symmetric polynomials, power mean, arithmetic mean, geometric mean, sublinear inequalities.}

\renewcommand{\thefootnote}{\arabic{footnote}}
\setcounter{footnote}{0}

%%%%%%%%

\begin{abstract}
In the paper the maximum and the minimum of the ratio of the difference of the arithmetic mean and the geometric mean, and the difference of the power mean and the geometric mean of $n$ variables, are studied. A new optimization argument was used which reduces $n$ variable optimization problem to a single variable. All possible cases of the choice of the power mean and the choice of the number of variables of the means are studied. The obtained results generalize and complete the earlier results which were either for specific intervals of power means or for small number of variables of the means. Some of the results are formulated as the best constant inequalities involving interpolation of the arithmetic mean and the geometric mean. The monotonicity and convergence of these best constants are also studied.
\end{abstract}

\section{Introduction}

Arithmetic mean, Geometric mean and their generalizations such as Power mean are fundamental concepts of Analysis and the inequalities comparing these means have both theoretical and applicational importance \cite{beck}, \cite{bull}, \cite{mitrin}. Arithmetic Mean - Geometric Mean (AM-GM) inequality is one of the many similar inequalities and it was generalized in many different ways (see e.g. \cite{hardy}, Chapter II). There are many different methods to prove these inequalities. One of them is Maclaurin's (geometric) and Cauchy's (analytical) proof of AM-GM inequality. Both methods are based on the idea of optimizing the product of any two numbers while keeping the sum of the same two numbers constant (see \cite{hardy}, p. 19, footnote). In the current paper we will use an analogous method to optimize the ratio of differences of some means but with 3 variables. As a corollary, these extreme values will give us the best constants for some generalizations of AM-GM inequality.

We need some notations to continue. Let $P_\alpha$ be power mean of non-negative real numbers $x_1,\ldots,x_n$:
$$
P_\alpha(x_1,\ldots,x_n)=\left(\frac{\sum\limits_{i =
1}^n{x_{i}^\alpha}}{n}\right)^\frac{1}{\alpha},
$$
except when (i) $\alpha=0$ or (ii) $\alpha<0$ and one or more of the $x_i$ are zero. If $\alpha=0$, then we assume that $P_\alpha=G_n$, where $G_n=\sqrt[n]{\prod\limits_{i =
1}^n{x_{i}}}$ is the geometric mean of numbers $x_1,\ldots,x_n$. If $\alpha<0$ and there is an $i$ such that $x_i=0$, then we assume that $P_\alpha=0$ (see \cite{hardy}, p. 12). In particular, if $\alpha=-1,\ 1,\ 2$, then we obtain familiar harmonic, arithmetic, and quadratic means:

$$H_n=\frac{n}{\sum\limits_{i =
1}^n{\frac{1}{x_{i}}}},\ 
A_n=\frac{\sum\limits_{i =
1}^n{x_{i}}}{n},\ Q_n=\sqrt{\frac{\sum\limits_{i =
1}^n{x_{i}^2}}{n}}.
$$

In the current paper, we are interested in finding extremal values of $\frac{A_n-G_n}{P_\alpha-G_n}$ for all possible choices of $\alpha$. For $n=2$ this was done in \cite{wu} (p. 153, Theorem 1), \cite{kouba} (p. 926, Theorem 3.3, $p=1$, $t=1$, $s\rightarrow0$), \cite{james} (p. 276, Theorem 1, only maximum). For general $n$ some intervals of values of $\alpha$ were studied by Wu Shanhe in \cite{wu0}, and Wen Jiajin, Cheng Sui-Sun, Gao Chaobang in \cite{wen2} and their references. The case $\alpha=-1$ follows from the detailed study of $\frac{A_n-G_n}{A_n-H_n}$ by N. Lord in \cite{lord}. Many cases such as general $\alpha<0$ or e.g. $\frac{1}{2}\le\alpha<1$ and $n\ge\frac{1}{1-\alpha}$ (see Theorem 1 below) remained open for at least one of the extreme values (see \cite{wu0}, p. 646-647 and \cite{wen2}, p. 134 and the references [5] and [6] in \cite{wen2}) and the current paper intends to consider all of them in a unified way. Similar questions for weighted means were studied in \cite{dinh} and its references. For related problems involving harmonic mean, quadratic mean and other exotic means such as the first and the second Seiffert means, the identric mean, the Neuman-S\'andor mean, the Sándor–Yang mean, the logarithmic mean, the contraharmonic mean, the bivariate means, S\'andor means $X$ and $Y$ etc. of two numbers see \cite{chu0}, \cite{chu}, \cite{chu1}, \cite{chu2}, \cite{kouba}, \cite{kouba1}, \cite{zhao}, \cite{qian}, \cite{long}, \cite{tian}, \cite{he}, \cite{neu}, \cite{mitev1}, \cite{mitev2}, \cite{zhu}, \cite{james0}, \cite{bhayo1}.

In \cite{aliyev1} it was shown that $-\infty<\frac{A_n^n-G_n^n}{H_nA_n^{n-1}-G_n^n}\le\frac{\lambda_n}{n^2}$, and the best constant $\lambda_n$ satisfies $\frac{n^3}{n-1}<\lambda_n<\frac{n^3}{n-2}$ for $n>2$. In particular, it follows that since $1+\frac{1}{n-1}<\frac{\lambda_n}{n^2}<1+\frac{1}{n-2}$, the ratio $\frac{\lambda_n}{n^2}$ decreases for $n>2$ and approaches $1$. In the current paper it is shown that similar monotonicity and convergence results hold true for the maximum and minimum of $\frac{A_n-G_n}{P_\alpha-G_n}$.

Applications include Entropy-Transport problems \cite{pon}, Euler, Finsler-Hadwiger and Ali type inequalities for simplices \cite{wu0}, inequalities involving the power of eigenvalues of Hermitian matrices and integral mean involving permanent of complex matrices \cite{wen2}.

\section{Lemmas}

In this section we collect some preparatory results.
We will need the following interesting limit.

\begin{lemma} If $\alpha,\ \beta,\ \gamma,$ and $\delta$ are real numbers and $\gamma\ne\delta$, then
$$
\lim_{(x_1,x_2,\ldots,x_n)\rightarrow (x,x,\ldots,x)} \frac{P_\alpha-P_\beta}{P_\gamma-P_\delta}=\frac{\alpha-\beta}{\gamma-\delta},
$$
where $x>0$ is an arbitrary real number.
\end{lemma}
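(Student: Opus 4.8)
The plan is to reduce the multivariate limit to a one-variable Taylor expansion. Write $x_i = x(1+\varepsilon u_i)$ for fixed reals $u_1,\dots,u_n$, not all equal, and let $\varepsilon \to 0^+$; since the ratio is homogeneous of degree $0$ in $(x_1,\dots,x_n)$ and symmetric under scaling, it suffices to prove the limit along every such ray, and the general limit follows because any sequence approaching $(x,\dots,x)$ can be written this way with bounded $u_i$. First I would record the second-order expansion of a single power mean: for $\alpha \ne 0$,
\[
P_\alpha\bigl(x(1+\varepsilon u_1),\dots,x(1+\varepsilon u_n)\bigr) = x\Bigl(1 + \bar u\,\varepsilon + \tfrac{\alpha-1}{2}\bigl(\overline{u^2}-\bar u^2\bigr)\varepsilon^2 + O(\varepsilon^3)\Bigr),
\]
where $\bar u = \tfrac1n\sum u_i$ and $\overline{u^2} = \tfrac1n\sum u_i^2$; the $\varepsilon$-coefficient is $\alpha$-independent, and the $\varepsilon^2$-coefficient carries the factor $\alpha - 1$. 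The key computation is expanding $\bigl(\tfrac1n\sum (1+\varepsilon u_i)^\alpha\bigr)^{1/\alpha}$ by the binomial series twice and collecting terms; the case $\alpha = 0$ (geometric mean) is handled separately by expanding $\log$, and one checks it agrees with the $\alpha \to 0$ value of the formula, i.e. the $\varepsilon^2$-coefficient is $-\tfrac12(\overline{u^2}-\bar u^2)$.

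Next I would subtract: since the linear terms cancel,
\[
P_\alpha - P_\beta = x\cdot\tfrac{\alpha-\beta}{2}\bigl(\overline{u^2}-\bar u^2\bigr)\varepsilon^2 + O(\varepsilon^3),
\]
and likewise $P_\gamma - P_\delta = x\cdot\tfrac{\gamma-\delta}{2}(\overline{u^2}-\bar u^2)\varepsilon^2 + O(\varepsilon^3)$. Because the $u_i$ are not all equal, the variance-like quantity $\overline{u^2} - \bar u^2$ is strictly positive, so it cancels from the ratio; dividing numerator and denominator by $x\varepsilon^2/2$ and letting $\varepsilon \to 0^+$ gives $\tfrac{\alpha-\beta}{\gamma-\delta}$, using $\gamma \ne \delta$ so the denominator limit is nonzero. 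Finally I would note that the limit is independent of the direction $(u_i)$ and of $\varepsilon$, which is exactly what is needed for the stated (unrestricted) limit as $(x_1,\dots,x_n) \to (x,\dots,x)$.

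The main obstacle is purely bookkeeping: carrying the double binomial expansion of $P_\alpha$ to order $\varepsilon^2$ uniformly, and making the $O(\varepsilon^3)$ remainders genuinely uniform on a neighborhood of the diagonal (rather than just along rays) so that the reduction to the one-variable limit is rigorous. A clean way to sidestep some of this is to invoke the fact that $P_\alpha$ is a $C^\infty$ function of $(x_1,\dots,x_n)$ near $(x,\dots,x)$ for each fixed $\alpha$ (including $\alpha = 0$, by the standard removable-singularity argument for the geometric mean), so that a genuine multivariate second-order Taylor expansion with Hessian $\tfrac{\alpha-1}{x}\cdot\tfrac1n\bigl(\mathrm{diag} - \tfrac1n J\bigr)$ at the diagonal is available; then the ratio of the two quadratic forms is constant and equal to $\tfrac{\alpha-\beta}{\gamma-\delta}$ away from the diagonal direction $(1,\dots,1)$, and the $O$ terms are automatically uniform by Taylor's theorem. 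I would present the ray computation for intuition and cite the Taylor-theorem version for rigor.
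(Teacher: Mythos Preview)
Your proposal is correct and is essentially the same argument as the paper's: both compute the second-order Taylor expansion of $P_\alpha$ at the diagonal point, observe that the first-order part is independent of $\alpha$ so that $P_\alpha-P_\beta$ reduces to $\tfrac{\alpha-\beta}{2}$ times a common positive quadratic form plus higher-order terms, and divide. The only cosmetic differences are that the paper normalizes by homogeneity to $x=1$ and $\sum x_j=n$ (so $\bar u=0$ from the outset) and writes out the partial derivatives explicitly rather than packaging them as the Hessian $\tfrac{\alpha-1}{xn}\bigl(I-\tfrac1nJ\bigr)$; and the paper handles $\alpha=0$ by continuity of $P_\alpha$ in $\alpha$ rather than by a separate $\log$ expansion.
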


\begin{proof} Since $\gamma\ne\delta$, by monotonicity of the power mean $P_\gamma\ne P_\delta$, unless all $x_i$ are equal (\cite{hardy}, p. 26). If $\alpha=\beta$, then $P_\alpha=P_\beta$ and the equality is trivially true. So let us assume that $\alpha\ne\beta$. For now  suppose also that $\alpha\beta\gamma\delta\ne0$.

Because of homogeneity, without loss of generality we can assume that $x=1$ and $\sum_{j=1}^nx_j=nx=n$.
We will use multivariable Taylor's formula (see e.g. \cite{konig}, p. 64 ff.; \cite{zorich}, Sect. 8.4.4). We find that if $j,k=1,2,\ldots,n$ and $k\ne j$, then
$$
P_\alpha(1,1,\ldots,1)=1,\ \left(P_\alpha\right)_{x_j}(1,1,\ldots,1)=\frac{1}{n},
$$
$$
\left(P_\alpha\right)_{x_j x_j}(1,1,\ldots,1)=\frac{(\alpha-1)(n-1)}{n^2},\ \left(P_\alpha\right)_{x_j x_k}(1,1,\ldots,1)=-\frac{\alpha-1}{n^2},
$$
Therefore,
$$
P_\alpha=1+\sum_{j=1}^n\frac{1}{n}(x_j-1)+\sum_{j=1}^n\frac{(\alpha-1)(n-1)}{2n^2}(x_j-1)^2
$$
$$
-\sum_{1\le j<k\le n}\frac{\alpha-1}{n^2}(x_j-1)(x_k-1)+\sum_{|a_1+a_2+\ldots+a_n|=2}o\left(\prod_{j=1}^n(x_j-1)^{a_j}\right)
$$
$$
=1+\sum_{j=1}^n\frac{1}{n}(x_j-1)+\frac{\alpha-1}{2n^2}\left(n\sum_{j=1}^n(x_j-1)^2-\left(\sum_{j=1}^n(x_j-1)\right)^2\right)+o\left(\sum_{j=1}^n(x_j-1)^{2}\right).
$$
Using the similar formula for $
P_\beta$ and the fact that $\sum_{j=1}^n(x_j-1)=0$ we obtain
$$
P_\alpha-P_\beta=\frac{\alpha-\beta}{2}\sum_{j=1}^n(x_j-1)^2+o\left(\sum_{j=1}^n(x_j-1)^{2}\right).
$$
By expressing $P_\gamma-P_\delta$ similarly, we obtain that
$$
\lim_{(x_1,x_2,\ldots,x_n)\rightarrow (x,x,\ldots,x)} \frac{P_\alpha-P_\beta}{P_\gamma-P_\delta}=\frac{\alpha-\beta}{\gamma-\delta}.
$$
If, for example, $\alpha=0$, then to prove the limit in this case we can use the continuity of $
P_\alpha$ with respect to $\alpha$ at $\alpha=0$ near the point $(x,x,\ldots,x)$ $(x>0)$ and the fact that $\lim_{\alpha\rightarrow 0} P_\alpha=G_n,$ . Similarly for the cases $\beta=0$, $\gamma=0$, and $\delta=0$.
\end{proof}

In particular, if $\alpha=0$, $\beta=1$, $\gamma=\frac{1}{r}\ne1$, and $\delta=1$, then by Lemma 1,
$$
\lim_{(x_1,x_2,\ldots,x_n)\rightarrow (x,x,\ldots,x)} \frac{G_n-A_n}{P_\gamma-A_n}=\frac{r}{r-1}. \eqno(1)
$$

We will use an optimization argument similar to the one used in \cite{aliyev1} (p. 8), \cite{aliyev2}. A similar method was used before in \cite{wen1} (Lemma 2.1), \cite{wen2} (Lemma 1). Maclaurin and Cauchy used analogous optimization methods to prove AM-GM inequality (see \cite{hardy}, p. 19, footnote).

\begin{lemma} If $x,\ y,$ and $z$ are positive real numbers, $r\ne 0,1$ is a real number, $x\le y\le z$, $xyz=\alpha$, $x+y+z=\beta$, where $\alpha$ and $\beta$ are positive constants such that $\beta^3>27\alpha$, then as $y$ increases, function
$
h(x,y,z)=x^r+y^r+z^r
$
decreases when $r>1$,
and increases when $r<0$ or $0<r<1$. If $r>1$, then the maximum and the minimum of function $h(x,y,z)$ are achieved when $x=y<z$ and $x<y=z$, respectively.
If $r<0$ or $0<r<1$, then the maximum and the minimum of function $h(x,y,z)$ are achieved when $x<y=z$ and $x=y<z$, respectively.
\end{lemma}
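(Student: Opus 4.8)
The plan is to treat $h$ as a function of the single variable $y$, with $x$ and $z$ determined implicitly by the two constraints $x+y+z=\beta$ and $xyz=\alpha$. Since $x$ and $z$ are the two remaining roots of $t^2-(\beta-y)t+\alpha/y=0$, they are real and distinct precisely when the discriminant $(\beta-y)^2-4\alpha/y>0$; the hypothesis $\beta^3>27\alpha$ guarantees a nondegenerate interval of admissible $y$, whose endpoints are exactly the configurations $x=y<z$ and $x<y=z$. So the first step is to differentiate. From $x+z=\beta-y$ and $xz=\alpha/y$ one gets $x'+z'=-1$ and $x'z+xz'=-\alpha/y^2$, hence
$$
x'=\frac{xz/y - x}{z-x}=\frac{x(z-y)/y}{z-x},\qquad z'=\frac{z(x-y)/y}{z-x}=-\frac{z(y-x)/y}{z-x}.
$$

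Next I would compute $\frac{dh}{dy}=r\bigl(x^{r-1}x' + y^{r-1} + z^{r-1}z'\bigr)$ and substitute the expressions above. After clearing the common positive factor $\frac{r}{y(z-x)}$, the sign of $h'(y)$ is governed by
$$
x^{r-1}\cdot x(z-y) - z^{r-1}\cdot z(y-x) + y^{r-1}\cdot y(z-x)
= x^r(z-y) + y^r(x-z) + z^r(y-x),
$$
(using $z-x=(z-y)+(y-x)$ to split the middle term). Rewriting this as $(z-y)(x^r-y^r) + (y-x)(z^r-y^r)$, I would analyze its sign using $x\le y\le z$: when $r>1$ both $x^r-y^r\le 0$ and $z^r-y^r\ge 0$, but the direction flips because $(z-y)$ multiplies the negative piece and $(y-x)$ the positive piece. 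The clean way to see the sign is to homogenize — divide by $y^r$, set $u=x/y\le 1\le v=z/y$ — and check that $g(u,v):=(v-1)(u^r-1)+(1-u)(v^r-1)$ is negative for $r>1$ and positive for $r<0$ or $0<r<1$, which follows from the strict concavity (resp. convexity) of $t\mapsto t^r$ together with $u<1<v$ (it is essentially the statement that the chord from $u$ to $v$ through the point $1$ bends the right way). This yields $h$ decreasing in $y$ when $r>1$ and increasing when $r<0$ or $0<r<1$.

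Finally, since $h$ is strictly monotone in $y$ on the admissible interval and that interval is closed with endpoints $x=y<z$ (smallest $y$) and $x<y=z$ (largest $y$), the extrema of $h$ are attained exactly at these two endpoint configurations, with the correspondence between max/min and endpoint dictated by the monotonicity direction: for $r>1$, the maximum is at the left endpoint $x=y<z$ and the minimum at the right endpoint $x<y=z$; for $r<0$ or $0<r<1$ the roles are reversed. I expect the main obstacle to be the sign analysis of $g(u,v)$ — making the concavity/convexity argument fully rigorous across the three parameter regimes ($r>1$, $0<r<1$, $r<0$), in particular handling $r<0$ where $t^r$ blows up near $0$ but the inequality still goes through because $u=x/y$ stays bounded away from $0$ on the admissible interval. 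A secondary bookkeeping point is verifying that the endpoints of the $y$-interval really are the two asserted degenerate configurations and that $z-x>0$ strictly in the interior, so that the division by $z-x$ is legitimate.
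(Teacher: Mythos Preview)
Your approach is exactly the paper's: parametrize by $y$, compute $h'$ via implicit differentiation of the two constraints, and reduce the sign of $h'$ to a chord--slope comparison for $t\mapsto t^r$ decided by convexity/concavity. The paper's closed form is
\[
h'_y=\frac{r(y-x)(z-y)}{y(x-z)}\left(\frac{z^r-y^r}{z-y}-\frac{y^r-x^r}{y-x}\right),
\]
and the endpoint identification (smallest admissible $y$ gives $x=y<z$, largest gives $x<y=z$) is also the same.

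That said, the execution has sign slips that would make the argument fail as written. Solving $x'+z'=-1$ and $zx'+xz'=-xz/y$ gives $x'=\dfrac{x(y-z)}{y(z-x)}$ and $z'=\dfrac{z(x-y)}{y(z-x)}$, the negatives of what you wrote; indeed your pair does not even satisfy $x'+z'=-1$ unless $y=z$. Your displayed identity is also off: the left side $x^r(z-y)-z^r(y-x)+y^r(z-x)$ equals $x^r(z-y)+y^r(z-x)+z^r(x-y)$, which differs in sign from your stated right side in both the $y^r$ and $z^r$ terms. The factor $\tfrac{r}{y(z-x)}$ is not a ``positive factor'' when $r<0$, so its sign must be tracked. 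Finally, your sign claim for $g(u,v)$ is inverted: writing
\[
g(u,v)=(v-1)(1-u)\left[\frac{v^r-1}{v-1}-\frac{1-u^r}{1-u}\right]
\]
with $0<u<1<v$, convexity of $t^r$ for $r>1$ or $r<0$ gives $g>0$, while concavity for $0<r<1$ gives $g<0$ --- the opposite of what you asserted. Once all these signs are corrected, everything collapses to the paper's formula and the stated monotonicity follows.
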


\begin{proof} It was shown in \cite{aliyev1} (p. 8) that $x,y,z$ satisfying $xyz=\alpha$ and $x+y+z=\beta$ can be parametrized as
$$
x=\frac{-t+\beta\pm\sqrt{(t-\beta)^2-\frac{4\alpha}{t}}}{2},\ y=t,\ z=\frac{-t+\beta\mp\sqrt{(t-\beta)^2-\frac{4\alpha}{t}}}{2},
$$
where $t\in [t_1,t_2]$, and $t_1\in \left(0,\frac{\beta}{3}\right),$ $t_2\in \left(\frac{\beta}{3},\beta\right)$ are the zeros of the cubic function $\kappa(x)=t^3-2\beta t^2+\beta^2 t-4\alpha$. The zero $t_3>\beta$ of $\kappa(x)$ is not in $[t_1,t_2]$. The part of the curve satisfying $x\le y\le z$ is parametrized as
$$
x=\frac{-t+\beta-\sqrt{(t-\beta)^2-\frac{4\alpha}{t}}}{2},\ y=t,\ z=\frac{-t+\beta+\sqrt{(t-\beta)^2-\frac{4\alpha}{t}}}{2},
$$
where $t\in [t^*_1,t^*_2]$, and $t^*_1\in \left(t_1,\frac{\beta}{3}\right)$, $t^*_2\in \left(\frac{\beta}{3},t_2\right)$ are the zeros of the cubic function $\kappa^*(t)=-8t^3+4\beta t^2-4\alpha$ (see Figure 1). The zero $t^*_3$ of $\kappa^*(t)$ satisfies $t^*_3<0$, so, $t^*_3$ is not in $[t^*_1,t^*_2]$. Note that if $t=t^*_1$, then $x=y<z$, and if $t=t^*_2$, then $x<y=z$. For the remaining points $t$ in interval $\left(t^*_1,t^*_2\right)$, we have $x<y<z$.
\begin{figure}[htbp]
\centerline{\includegraphics[scale=.5]{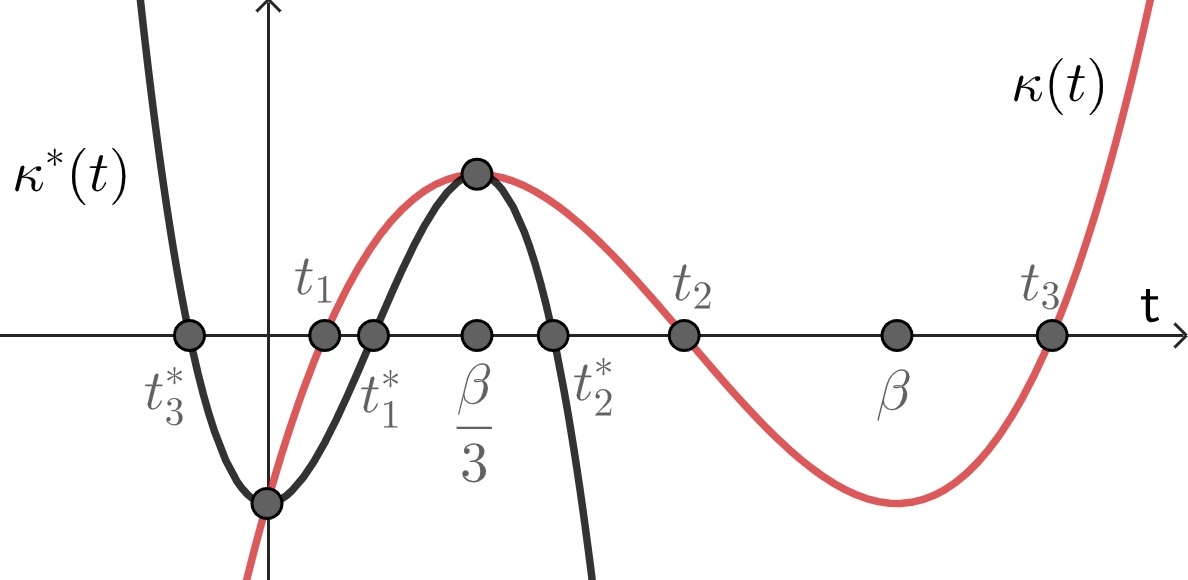}\ \includegraphics[scale=.5]{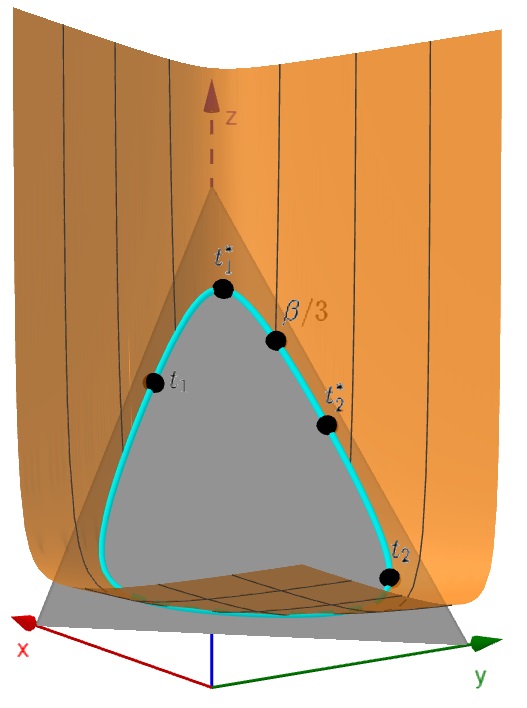}}
\label{fig1}
\caption{Left: The graphs of $\kappa(t)$ (red), $\kappa^*(t)$ (black) and their zeros $t_1$, $t_2$, $t_3$,  $t^*_1$, $t^*_2$, and $t^*_3$ (black). Right: Parametric space curve (blue) representing the intersection of plane $x+y+z=\beta$ (gray) and surface $xyz=\alpha$ (orange). The points corresponding to $t=\frac{\beta}{3}$, $t_1$, $t_2$,  $t^*_1$, and $t^*_2$ (black).}
\end{figure}
By Vieta's formulas $x$ and $z$ are the solutions of quadratic equation $w^2-(\beta-y)w+\frac{\alpha}{y}=0$. Using implicit differentiation we find that $w'_t=\frac{\frac{\alpha}{y^2}-w}{2w+y-\beta}$. So,  $x'_t=\frac{\frac{\alpha}{y^2}-x}{2x+y-\beta}$ and  $z'_t=\frac{\frac{\alpha}{y^2}-z}{2z+y-\beta}$. Then
$$
h'_t=rx^{r-1}\frac{\frac{\alpha}{y^2}-x}{2x+y-\beta}+ry^{r-1}+rz^{r-1}\frac{\frac{\alpha}{y^2}-z}{2z+y-\beta}.
$$
After simplifications we obtain that
$$
h'_t=\frac{r(y-x)(z-y)}{y(x-z)}\cdot \left(\frac{z^r-y^r}{z-y}-\frac{y^r-x^r}{y-x}\right).
$$
If $r<0$ or $r>1$, then function $t^r$ is concave up for $t>0$. Therefore $\frac{z^r-y^r}{z-y}>\frac{y^r-x^r}{y-x}$. Consequently, if $r>1$, then $h'_t<0$ ($h\downarrow$) and if $r<0$, then $h'_t>0$  ($h\uparrow$) . Similarly, if $0<r<1$, then function $t^r$ is concave down for $t>0$. Therefore $\frac{z^r-y^r}{z-y}<\frac{y^r-x^r}{y-x}$ (see Figure 2). Consequently, if $0<r<1$, then $h'_t>0$ ($h\uparrow$). This means that if $r>1$, then the maximum and the minimum of function $h(x,y,z)$ ($t\in [t^*_1,t^*_2]$) are achieved when $t^*_1=x=y<z=t^*_2$ and $t^*_1=x<y=z=t^*_2$, respectively. Similarly, if $r<0$ or $0<r<1$, then the maximum and the minimum of function $h(x,y,z)$ ($t\in [t^*_1,t^*_2]$) are achieved when $t^*_1=x<y=z=t^*_2$ and $t^*_1=x=y<z=t^*_2$, respectively. 
\end{proof}
\begin{figure}[htbp]
\centerline{\includegraphics[scale=.5]{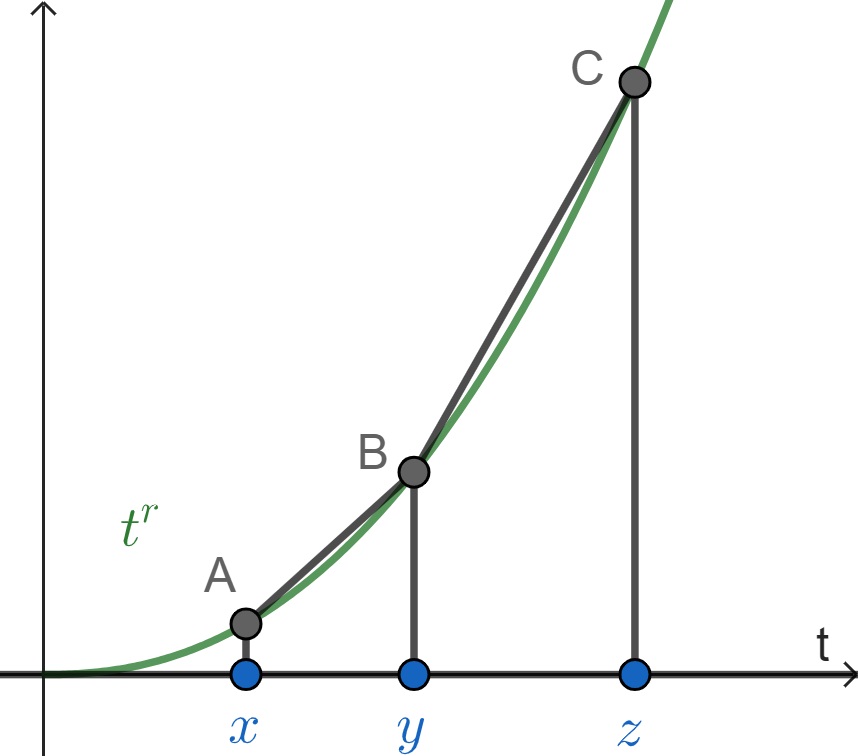}\ \includegraphics[scale=.5]{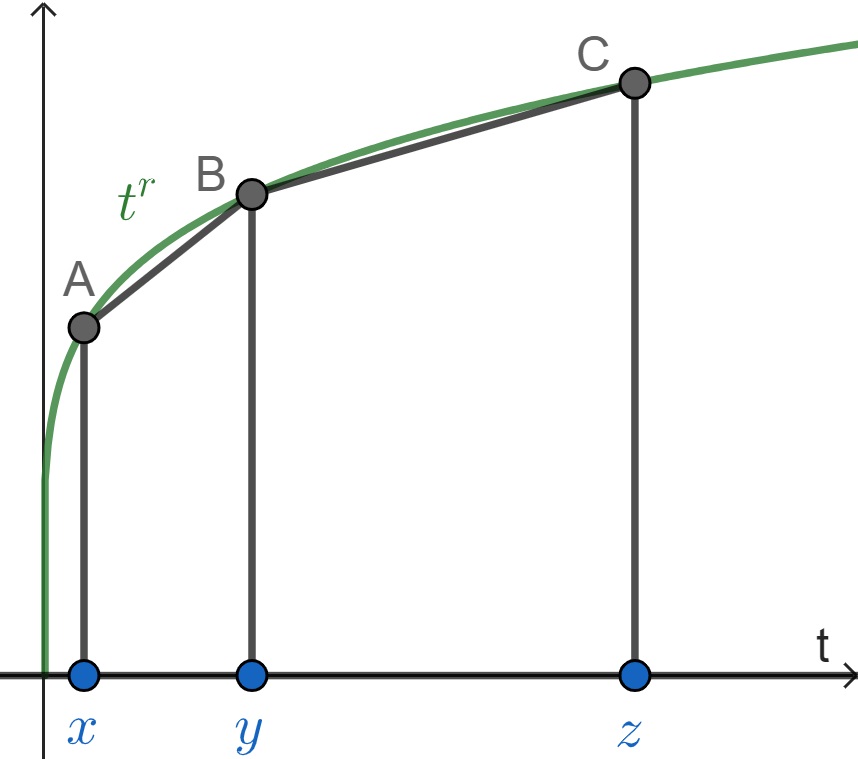}\ \includegraphics[scale=.5]{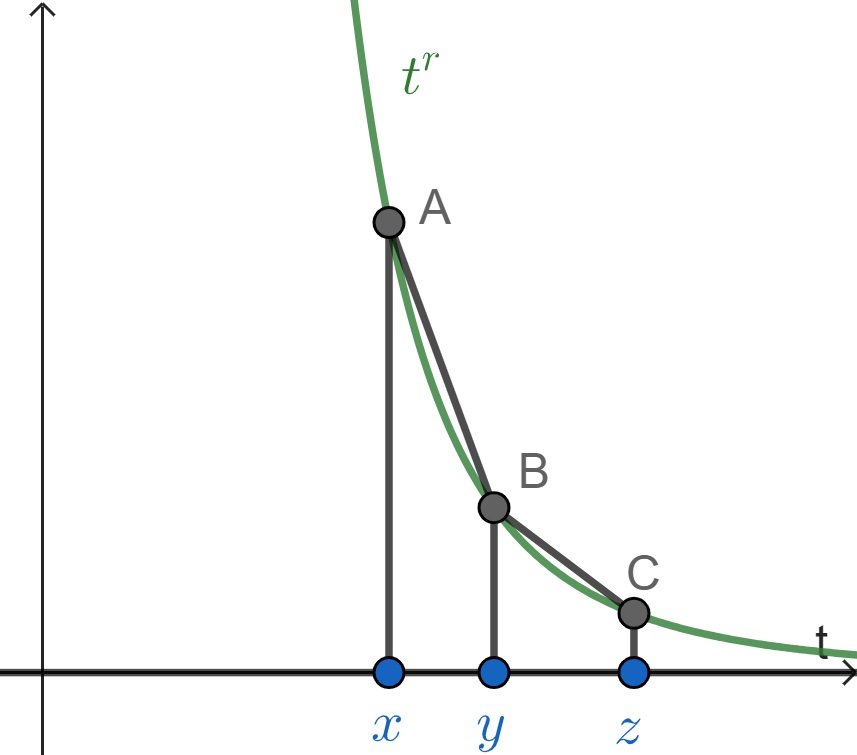}}
\label{fig2}
\caption{Comparison of slopes of chords $AB$ and $BC$ of function $t^r$ $(t>0)$ in cases $r>1$ (left), $0<r<1$ (center), and $r<0$ (right).}
\end{figure}

The following classical result will be essential for our study and can be proved easily based on equality
$$
\left(\frac{f_1}{f_2}\right)'=\left(\frac{f'_1}{f'_2}-\frac{f_1}{f_2}\right)\frac{f'_2}{f_2}.
$$

\begin{lemma} (\cite{hardy}, p. 106) If $f_1,\ f_2,$ and $f'_1/f'_2$ are positive increasing functions, then $f_1/f_2$ either increases for all $x$ in question, or decreases for all such $x$, or decreases to a minimum and then increases. In particular, if $f_1(0)=f_2(0)=0$, then $f_1/f_2$ increases for $x>0$.
\end{lemma}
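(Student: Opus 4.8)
The plan is to work directly from the identity quoted just before the statement. Write $q=f_1/f_2$ and $p=f_1'/f_2'$; the quotient rule gives exactly
$$
q'=\Bigl(p-q\Bigr)\frac{f_2'}{f_2}.
$$
Since $f_2$ is positive and $f_1'/f_2'$ is positive (which forces $f_2'>0$), the factor $f_2'/f_2$ is strictly positive, so at every point $q'$ has the same sign as $\phi:=p-q$. Thus the whole question is reduced to locating the sign of $\phi$ along the interval.

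The heart of the argument is the following claim: \emph{if $\phi(x_0)\ge 0$ at some point $x_0$, then $\phi(x)\ge 0$ for every $x\ge x_0$ in the interval} --- equivalently, the sign of $q'$ may pass from $-$ to $+$ but never back. I would prove it by contradiction: if $\phi(x_1)<0$ for some $x_1>x_0$, let $c$ be the largest point of $[x_0,x_1]$ at which $\phi\ge 0$ (this set is nonempty and, $\phi$ being continuous, closed). Then $\phi<0$ on $(c,x_1]$, so $q'<0$ there and $q$ is strictly decreasing on $[c,x_1]$; hence
$$
q(x_1)<q(c)\le p(c)\le p(x_1),
$$
the middle inequality because $\phi(c)\ge 0$ and the last because $p$ is increasing. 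This says $\phi(x_1)>0$, contradicting the choice of $x_1$. Granting the claim, set $c^\ast=\inf\{x:\phi(x)\ge 0\}$: before $c^\ast$ one has $\phi<0$, so $q$ strictly decreases, and after $c^\ast$ one has $\phi\ge 0$, so $q$ does not decrease. The three positions of $c^\ast$ --- the left end, the right end, or an interior point --- are exactly the three alternatives in the statement (increase throughout, decrease throughout, or decrease to a minimum at $c^\ast$ and then increase).

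For the final assertion I would argue directly rather than through the trichotomy. When $f_1(0)=f_2(0)=0$, for $x>0$
$$
q(x)=\frac{f_1(x)}{f_2(x)}=\frac{\int_0^x f_1'(t)\,dt}{\int_0^x f_2'(t)\,dt}=\frac{\int_0^x p(t)\,f_2'(t)\,dt}{\int_0^x f_2'(t)\,dt},
$$
which exhibits $q(x)$ as a weighted average of the increasing function $p$ over $[0,x]$ with the positive weight $f_2'(t)\,dt$; in particular $q(x)\le p(x)$, so $\phi(x)\ge 0$ for all $x>0$ and $q$ increases on $(0,\infty)$ by the previous paragraph. (Monotonicity is also transparent from the weighted-average form, since enlarging $[0,x]$ incorporates larger values of $p$.)

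I expect the only delicate point to be making the trichotomy conclusion precise while assuming no more than what is used in the applications (namely that $f_1,f_2$ are differentiable with $f_2'>0$ and $f_1'/f_2'$ continuous and increasing); the ``last point where $\phi\ge 0$'' device in the claim is exactly what makes this routine. The identity, the sign reduction, and the weighted-average estimate are all immediate.
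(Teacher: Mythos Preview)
Your proof is correct and proceeds exactly from the identity $\bigl(f_1/f_2\bigr)'=\bigl(f_1'/f_2'-f_1/f_2\bigr)\,f_2'/f_2$ that the paper singles out as the basis of the argument; the paper itself gives no further details beyond citing Hardy, so your sign-tracking of $\phi=p-q$ together with the weighted-average estimate for the special case $f_1(0)=f_2(0)=0$ is precisely the natural elaboration of the indicated approach.
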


For the purposes of clarity, we will also refer to the following more special result, which is sometimes referred to as "L'Hôpital type Monotonicity Rule" and was widely used (see e.g. \cite{wu}, \cite{qian}, \cite{tian}, \cite{he}, \cite{neu}, \cite{kouba}, \cite{bhayo2}) to study optimization problems similar to the one in the current paper.

\begin{lemma} (\cite{ander}, p. 10) Let $-\infty<a<+\infty$, and let $f_1,\ f_2:[a,b]\rightarrow \mathbb{R}$ be continuous on $[a,b]$ and differentiable on $(a,b)$. Let $f'_2(x)\ne0$ on $(a, b)$. Then, if $f'_1(x)/f'_2(x)$ is increasing (decreasing) on $(a , b)$, so are
$$[f_1(x) - f_1(a)] /[f_2(x) - f_2(a)] \text{ and } [f_1(x) - f_1(b)] /[f_2(x) - f_2(b)].$$
If $f'_1(x)/f'_2(x)$ is strictly monotone, then the monotoneity in the conclusion is also strict.
\end{lemma}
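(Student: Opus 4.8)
The plan is to reduce the whole statement to the differentiation identity displayed just before Lemma 3 together with Cauchy's Mean Value Theorem. First I would normalize the sign of $f_2'$. Since $f_2'$ is a derivative it has the intermediate value property, so if it never vanishes on $(a,b)$ it keeps a constant sign there; replacing $f_2$ by $-f_2$ if necessary — which simultaneously reverses the monotonicity of $f_1'/f_2'$ and that of both quotients in the conclusion, so there is no loss — I may assume $f_2'>0$ on $(a,b)$. Then $f_2$ is strictly increasing, hence $f_2(x)-f_2(a)>0$ and $f_2(x)-f_2(b)<0$ for every $x\in(a,b)$, so both quotients in the statement are well defined and differentiable there.

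Next I would handle the first quotient $g(x):=[f_1(x)-f_1(a)]/[f_2(x)-f_2(a)]$. Applying the identity above Lemma 3 with $f_1,f_2$ replaced by $f_1-f_1(a),\ f_2-f_2(a)$ (whose derivatives are still $f_1',f_2'$), I get for $x\in(a,b)$
$$
g'(x)=\left(\frac{f_1'(x)}{f_2'(x)}-g(x)\right)\frac{f_2'(x)}{f_2(x)-f_2(a)},
$$
where the last factor is strictly positive. To control the sign of $f_1'(x)/f_2'(x)-g(x)$ I apply Cauchy's Mean Value Theorem on $[a,x]$: there is $\xi\in(a,x)$ with $g(x)=f_1'(\xi)/f_2'(\xi)$ (legitimate because $f_2'$ and $f_2(x)-f_2(a)$ are nonzero). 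If $f_1'/f_2'$ is increasing on $(a,b)$ then $\xi<x$ yields $g(x)\le f_1'(x)/f_2'(x)$, hence $g'(x)\ge0$ and $g$ is increasing; if $f_1'/f_2'$ is strictly increasing the inequality and hence the monotonicity are strict. The decreasing case is identical with all inequalities reversed.

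For the second quotient $h(x):=[f_1(x)-f_1(b)]/[f_2(x)-f_2(b)]$ the same identity gives
$$
h'(x)=\left(\frac{f_1'(x)}{f_2'(x)}-h(x)\right)\frac{f_2'(x)}{f_2(x)-f_2(b)},
$$
but now the last factor is strictly \emph{negative}. Cauchy's Mean Value Theorem on $[x,b]$ produces $\eta\in(x,b)$ with $h(x)=f_1'(\eta)/f_2'(\eta)$; if $f_1'/f_2'$ is increasing then $\eta>x$ forces $h(x)\ge f_1'(x)/f_2'(x)$, so the parenthesis is $\le0$, and multiplied by the negative factor gives $h'(x)\ge0$, i.e. $h$ is increasing (strictly under strict monotonicity). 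Again the decreasing case is verbatim. Finally, monotonicity on the open interval extends to $[a,b]$ by the assumed continuity of $f_1,f_2$.

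I do not expect a genuine obstacle — this is essentially a textbook rule — so the only care needed is the sign bookkeeping: normalizing the sign of $f_2'$, verifying that $f_2(x)-f_2(a)$ and $f_2(x)-f_2(b)$ have the claimed signs, and invoking Cauchy's theorem in the correct direction (on $[a,x]$ for $g$, on $[x,b]$ for $h$) so that the inequality coming from the monotonicity of $f_1'/f_2'$ combines with the sign of the outside factor to give a definite sign for the derivative.
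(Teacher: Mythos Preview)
The paper does not supply its own proof of this lemma; it simply quotes the result from \cite{ander} (and remarks on earlier occurrences in \cite{cheag}, \cite{ander0}), so there is no in-paper argument to compare against. Your proof is correct and is essentially the standard one: the combination of the differentiation identity displayed before Lemma~2.3 with Cauchy's Mean Value Theorem is exactly how this ``L'H\^opital type monotonicity rule'' is usually established (see e.g.\ \cite{ander1}). The sign bookkeeping --- Darboux's theorem to fix the sign of $f_2'$, Cauchy on $[a,x]$ for the first quotient and on $[x,b]$ for the second so that the resulting inequality matches the sign of $f_2'/(f_2(x)-f_2(\cdot))$ --- is all handled correctly. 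One cosmetic remark: your closing sentence about extending monotonicity to $[a,b]$ by continuity is superfluous, since the two quotients are undefined at $x=a$ and $x=b$ respectively; the conclusion is meant on the open interval (or on $(a,b]$ and $[a,b)$ respectively), and your argument already gives that.
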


As was pointed out in \cite{ander1} (p. 805) results similar to Lemma 2.4 appeared before in \cite{cheag} (p. 42) and \cite{ander0} (p. 6-7) (see also \cite{pin}, \cite{pin2}, \cite{pin3}, \cite{alzer}). 

\section{Main results}

In the following we will focus on the study of maximum and minimum values of $\frac{A_n-G_n}{P_\alpha-G_n}$. This question is meaningful if $\alpha\ne0,1$. Therefore we will assume that $\alpha=\frac{1}{r}$, $r\ne0,1$ and write just $P$ instead of $P_\alpha$. Also, without loss of generality we can assume that $\sum\limits_{i =
1}^n{x_{i}}=1$. The case $n=2$ follows from the results in \cite{wu}, \cite{kouba}, \cite{james} so in the following we assume that $n>2$.

Let us denote
$$
q(x_1,\ldots,x_n)=\begin{cases}

{\frac{r}{r-1}}, & \text{if } x_i=\frac{1}{n}\ (i=1,2,\ldots,n);\\
{\frac{G_n-\frac{1}{n}}{P-\frac{1}{n}}}, & \text{if otherwise.}

\end{cases}
$$
By Power mean inequality (see e.g. \cite{beck}, p. 17) if $0<r<1$, then $P\ge\frac{1}{n}$, and if $r>1$ or $r<0$, then $P\le\frac{1}{n}$. Also, $\frac{1}{n}\ge G_n$ and the equalities are possible if and only if all $x_i$ ($i=1,2,\ldots,n$) are equal to $\frac{1}{n}$. Similarly,  if $r>0$, then $P\ge G_n$, and if $r<0$, then $P\le G_n$.
By Equation (1), function $q(x_1,\ldots,x_n)$ is continuous at point $\left(\frac{1}{n},\frac{1}{n},\ldots,\frac{1}{n}\right)$. So, function $q(x_1,\ldots,x_n)$ is non-zero and continuous over compact set
$$
\mathbb{D}=\{(x_{1},x_{2},\ldots ,x_{n})|\ x_{1},x_{2},\ldots ,x_{n}\ge 0;\sum\limits_{i =
1}^n{x_{i}}=1\},
$$
and
$\frac{A_n-G_n}{P_\alpha-G_n}=-\frac{1}{\frac{1}{{q(x_1,\ldots,x_n)}}-1}$, whenever the functions on the left hand side and the right hand side of this equality are defined. By classical result in analysis (see e.g. \cite{rudin}, Theorem 4.16) a real-valued function continuous over a compact metric space attains its extremal values on this space. So, there is a point $(x_1,\ldots,x_n)\in \mathbb{D}$ where $q$ attains its maximum (minimum). For the point $(x_1,\ldots,x_n)$ we can assume without loss of generality that $x_1\le x_2\le\ldots\le x_n$. We will consider 2 cases $x_1>0$ and $x_1=0$.

\noindent \textbf{The case $x_1>0$.}
Take any 3 consecutive coordinates of point $(x_1,\ldots,x_n)$, say $x=x_i,\ y=x_{i+1},$ and $z=x_{i+2}$. By Lemma 2.2, either $x=y\le z$ or $x\le y=z$. This means that there can be at most 2 different numbers in set $\{x_1,\ldots,x_n\}$. In other words, either (i) $x_{1}=x_{2}=\ldots =x_{n-1}\le x_{n}$ or (ii) $x_{1}\le x_{2}=\ldots =x_{n-1}=x_{n}$. We can unify these 2 cases into one by taking $$x_{1}=x_{2}=\ldots =x_{n-1}=x,\ x_{n}=1-(n-1)x,$$ where $0\le x\le\frac{1}{n-1}$. Indeed, the cases (i) and (ii), with possible renaming of the variables, correspond to intervals $0\le x\le\frac{1}{n}$ and $\frac{1}{n}\le x\le\frac{1}{n-1}$, respectively. Thus our problem is reduced to study of function $f(x)=\frac{g(x)-\frac{1}{n}}{p(x)-\frac{1}{n}}$ in interval $0\le x\le\frac{1}{n-1}$, where $$g(x)=\sqrt[n]{x^{n-1}(1-(n-1)x)},\ p(x)=\left(\frac{(n-1)x^{\frac{1}{r}}+(1-(n-1)x)^{\frac{1}{r}}}{n}\right)^r.$$
Note that $g(0)=g\left(\frac{1}{n-1}\right)=0$. If $r>0$, then $p(0)=\frac{1}{n^r}$, and $p\left(\frac{1}{n-1}\right)=\frac{(n-1)^{r-1}}{n^r}$, otherwise $p(0)=p\left(\frac{1}{n-1}\right)=0$. Therefore, if $r>0$, then $f(0)=\frac{n^{r-1}}{n^{r-1}-1}$, and $f\left(\frac{1}{n-1}\right)=\frac{n^{r-1}}{n^{r-1}-(n-1)^{r-1}}$, otherwise $f(0)=f\left(\frac{1}{n-1}\right)=1$.
We also calculate
$$g'(x)= \frac{(n-1)(1-nx)}{nx(1-(n-1)x)}g(x),$$
$$p'(x)=\frac{(n-1)\left(x^{\frac{1}{r}-1}-(1-(n-1)x)^{\frac{1}{r}-1}\right)}{(n-1)x^{\frac{1}{r}}+(1-(n-1)x)^{\frac{1}{r}}}p(x).$$
In particular, from these formula it follows that if $0< x<\frac{1}{n}$ ($\frac{1}{n}< x<\frac{1}{n-1}$), then $g(x)\uparrow$ ($g(x)\downarrow$), and $0\le g(x)\le \frac{1}{n}$. Also $g'(0)=+\infty$ and $g'\left(\frac{1}{n-1}\right)=-\infty$. Similarly, if $r<0$ or $r>1$, then for interval $0< x<\frac{1}{n}$ ($\frac{1}{n}< x<\frac{1}{n-1}$), function $p(x)\uparrow$ ($p(x)\downarrow$). If $0<r<1$, then for interval $0< x<\frac{1}{n}$ ($\frac{1}{n}< x<\frac{1}{n-1}$), function $p(x)\downarrow$ ($p(x)\uparrow$). By considering maximal minimal values of $p(x)$ and $g(x)$, one can see that $f(x)$ doesn't change its sign in interval $0< x<\frac{1}{n-1}$. We can determine also the values of $p'(x)$ at points $x=0$ and $x=\frac{1}{n-1}$ (see Table 1).

\begin{table}
\begin{tabular}{ |c||c|c|c|  }

 \hline
 & $r<0$ & $0<r<1$& $1<r$\\
 \hline
 $p'(0)$   & $\left(\frac{n-1}{n}\right)^r$  & $-\frac{n-1}{n^r}$  & $+\infty$\\
 $p'\left(\frac{1}{n-1}\right)$ & $-\frac{n-1}{n^r}$ & $\left(\frac{n-1}{n}\right)^r$  &   $-\infty$\\
 
\hline
\end{tabular}

\label{tab1}
\caption{The values of $p'(x)$ at points $0$ and $\frac{1}{n-1}$.}
\end{table}
We can also write
$$
\frac{g'(x)}{p'(x)}=\frac{g(x)}{p(x)}\cdot \frac{x-\frac{1}{n}}{x-\frac{1}{n-1+\left(\frac{1}{x}-(n-1)\right)^{\frac{1}{r}}}}.
$$
Similarly,
$$g''(x)= -\frac{(n-1)g(x)}{n^2x^2(1-(n-1)x)^2}= -\frac{g'(x)}{nx(1-(n-1)x)(1-nx)},$$
$$p''(x)=-\frac{(n-1)(r-1)x^{\frac{1}{r}-2}(1-(n-1)x)^{\frac{1}{r}-2}p(x)}{r\left((n-1)x^{\frac{1}{r}}+(1-(n-1)x)^{\frac{1}{r}}\right)^2}$$
$$=-\frac{(r-1)x^{\frac{1}{r}-2}(1-(n-1)x)^{\frac{1}{r}-2}p'(x)}{r\left((n-1)x^{\frac{1}{r}}+(1-(n-1)x)^{\frac{1}{r}}\right)\left(x^{\frac{1}{r}-1}-(1-(n-1)x)^{\frac{1}{r}-1}\right)}.$$
 \newcommand{\sgn}{\mathop{\mathrm{sgn}}}Therefore, $g''(x)<0$ and $\sgn{p''(x)}=-\sgn{\frac{r}{r-1}}$ in interval $0< x<\frac{1}{n-1}$.
We can also write
$$
\frac{g''(x)}{p''(x)}=\frac{g'(x)}{p'(x)}\cdot\frac{r}{n(r-1)}\cdot \frac{1-(n-1)x}{1-nx}\left(\frac{n-1}{\left(\frac{1-(n-1)x}{x}\right)^{\frac{1}{r}}}+1\right)\left(1-\left(\frac{1-(n-1)x}{x}\right)^{\frac{1}{r}-1}\right).
$$
If we denote $s(x)=\frac{x}{1-(n-1)x}$ and $W(x)=U(x)\cdot V(x)$, where $U(x)= \frac{(s(x))^{1-\frac{1}{r}}-1}{\left({1-\frac{1}{r}}\right)\left(s(x)-1\right)}$ and $V(x)=\frac{(n-1)(s(x))^{\frac{1}{r}}+1}{n}$, then this can be written simply as
$$
\frac{g''(x)}{p''(x)}=\frac{g'(x)}{p'(x)}\cdot W(x).
$$
Note that if $0< x<\frac{1}{n-1}$, then $s(x)\uparrow$ and $0<s(x)<+\infty$. Also $s(0)=0$, $s(\frac{1}{n})=1$, and $\lim_{x\rightarrow\frac{1}{n-1}}s(x)=+\infty$.

\noindent\textit{Monotonicity of $U(x)$ and $V(x)$.} First of all note that $U(\frac{1}{n})=1$, and $V(\frac{1}{n})=1$.
Obviously, in interval $0\le x\le\frac{1}{n-1}$, if $r>0$, then function $V(x)\uparrow$, and if $r<0$, then function $V(x)\downarrow$. Similarly, in interval $0\le x\le\frac{1}{n-1}$, if $0<r<1$ or $r>1$, then function $U(x)\downarrow$, and if $r<0$, then function $U(x)\uparrow$. In interval $\frac{1}{n}\le x\le\frac{1}{n-1}$ this can be proved by using Lemma 2.4 for functions $f_1= \frac{(s(x))^{1-\frac{1}{r}}}{\left({1-\frac{1}{r}}\right)}$ and $f_2=s(x)$, with $a=\frac{1}{n}$. In interval $0\le x\le\frac{1}{n}$ this can be proved by using Lemma 2.4 for the same functions $f_1$ and $f_2$, with $b=\frac{1}{n}$. Note that $\frac{f'_1(x)}{f'_2(x)}=(s(x))^{-\frac{1}{r}}$ is decreasing if $r>0$ and increasing if $r<0$.

\noindent\textit{Monotonicity intervals of $W(x)$.} We calculate
$$
W'(x)=\frac{\left(n-1\right) s^{\frac{-r+1}{r}}-s^{\frac{r-1}{r}}+\left(-r+1\right) s^{-\frac{1}{r}}+\left(n-1\right)\left( r-1\right) s^{\frac{1}{r}}-\left(n-2\right) r}{\left(s-1\right)^{2} \left(r-1\right) n}\cdot s'(x),
$$
where $s=s(x)$. We can check that $\frac{s'(x)}{\left(s-1\right)^{2}}=\frac{1}{(nx-1)^2}$. Therefore the values of $
W(x)$ and $
W'(x)$ at points $x=0$ and  $x=\frac{1}{n-1}$ can be calculated (see Table 2). We also find $W\left(\frac{1}{n}\right)=1$, $W'\left(\frac{1}{n}\right)=\frac{n(n-2)}{2r}$ and if $r>1$, then $W(0)=\frac{r}{n(r-1)}$ and $W\left(\frac{1}{n-1}\right)=\frac{r(n-1)}{n(r-1)}$.

\noindent
\begin{table}
\begin{tabular}{ |c||c|c|p{2cm}|p{2cm}|p{1.1cm}|p{1.1cm}|  }

 \hline
 & $r<0$ & $0<r<1$& $1<r<2$, $n<\frac{r}{r-1}$  & $1<r<2$, $n\ge\frac{r}{r-1}$ & $2\le r$, $n\ge r$ &  $2\le r$, $n< r$ \\
 \hline
 $W(0)$   & $+\infty$  & $+\infty$  & $>1$    &$\le1$&   $<1$&   $<1$\\
 $W\left(\frac{1}{n-1}\right)$ & $+\infty$  & $+\infty$  &   $>1$   & $>1$    &$\ge1$&$<1$ \\
 
\hline
\end{tabular}

\noindent

\begin{tabular}{ |c||c|c|c|c|c|  }

 \hline
 & $r<0$ & $0<r<1$  & $1<r<2$ & $r=2$ & $2<r$\\
 \hline

 $W'\left(0\right)$ & $-\infty$  & $-\infty$  &   $-\infty$  & $+\infty$   &$+\infty$\\
 $W'\left(\frac{1}{n-1}\right)$& $+\infty$  & $+\infty$   &$+\infty$ & $+\infty$&  $-\infty$\\
\hline

\end{tabular}
\label{tab2}
\caption{The values of $W(x)$ and $W'(x)$ at points $0$ and $\frac{1}{n-1}$.}
\end{table}
If $0<r<1$; $1<r$, $n\le\frac{r}{r-1}$; or $1<r<2$, $n\ge\frac{r}{r-1}$, then we can take $f_1=V(x)$, $f_2=\frac{1}{U(x)}$ in Lemma 2.3, so that $W(x)=\frac{f_1}{f_2}$, and use the fact that
$$
\frac{f_1^{\prime}}{f_2^{\prime}}=\frac{ r\left(n-1\right) s^{\frac{1}{r}} \left(s^{\frac{r-1}{r}}-1\right)^{2}}{n \left(r-1\right) \left(s^{\frac{r-1}{r}} r+s^{\frac{2 r-1}{r}}-r s-s^{\frac{r-1}{r}}\right)},
$$
is increasing for $0<r<2$ ($r\ne1$). Indeed, we can write this function as
$$
\frac{f_1^{\prime}}{f_2^{\prime}}=
\frac{\alpha \left(n-1\right) s^{\alpha-1} \left(s^{1-\alpha}-1\right)^{2}}{n\left(1-\alpha\right)  \left(\alpha \,s^{1-\alpha}-1+\left(1-\alpha\right) s^{-\alpha}\right)}.
$$
Then
$$
\left(\frac{f_1^{\prime}}{f_2^{\prime}}\right)_s'=\frac{ \alpha s^{\alpha -1} \left(n-1\right)h(s) \left(s^{1-\alpha}-1\right)}{ ns \left(\alpha  s^{1-\alpha}-s^{-\alpha} \alpha +s^{-\alpha}-1\right)^{2}}
,
$$
where $$h(s)=s^{1-2\alpha}+2 \alpha  s^{1-\alpha}-2 s^{-\alpha} \alpha -s^{1-\alpha}+s^{-\alpha}-1.$$Note that 
$$h'(s)=-\frac{2 \alpha-1}{s^{\alpha+1}}\cdot\left(\alpha  s- \alpha +s^{1- \alpha}-s\right).$$
By Bernoulli's inequality if $\frac{1}{2}<\alpha<1$, then $$s^{1- \alpha}=(1+(s-1))^{1- \alpha}\le1+(1-\alpha)(s-1)=\alpha-\alpha  s +s,$$and therefore $h(s)$ is increasing for $0<s<+\infty$. Since $h(1)=0$, it follows that $h(s)<0$ for $0<s<1$ and  $h(s)>0$ for $s>1$. By noting that for $\frac{1}{2}<\alpha<1$, the function $s^{1-\alpha}-1$ has the same sign change near the point $s=1$, we obtain that  $
\left(\frac{f_1^{\prime}}{f_2^{\prime}}\right)_s'>0$ both for $0<s<1$ and for $s>1$. Consequently, $
\frac{f_1^{\prime}}{f_2^{\prime}}$ is an increasing function. The case $\alpha >1$ is considered similarly.

By Lemma 2.3 and the details about function $W(x)$ in Table 1, it decreases to a minimum and then increases. If $r<0$, then we can change the variable $x\longrightarrow\frac{1}{n-1}-x$ in $f_1=V(x)$, $f_2=\frac{1}{U(x)}$, and show again using Lemma 2.3, that $W(x)$ decreases to a minimum and then increases. Finally, if $2<r$, $n\ge r$; or $2<r$, $n< r$, then we can take $f_1=\frac{1}{U(x)}$ and $f_2=V(x)$, so that $\frac{1}{W(x)}=\frac{f_1}{f_2}$, and $
\frac{f_1^{\prime}}{f_2^{\prime}}$
is increasing. Consequently, $\frac{1}{W(x)}$ decreases to a minimum and then increases, which means that ${W(x)}$ increases to a maximum and then decreases. If $r=2$, then
$$
W'(x)=\frac{\left(n-2\right) \left(s^{\frac{1}{2}}+s^{-\frac{1}{2}}-2\right) s'(x)}{\left(s-1\right)^{2} n}>0,
$$
and therefore $W(x)$ is increasing in interval $\left(0, \frac{1}{n-1}\right)$. The resulting graphs are shown in Figure 3.
\begin{figure}[htbp]
\centerline{\includegraphics[scale=.4]{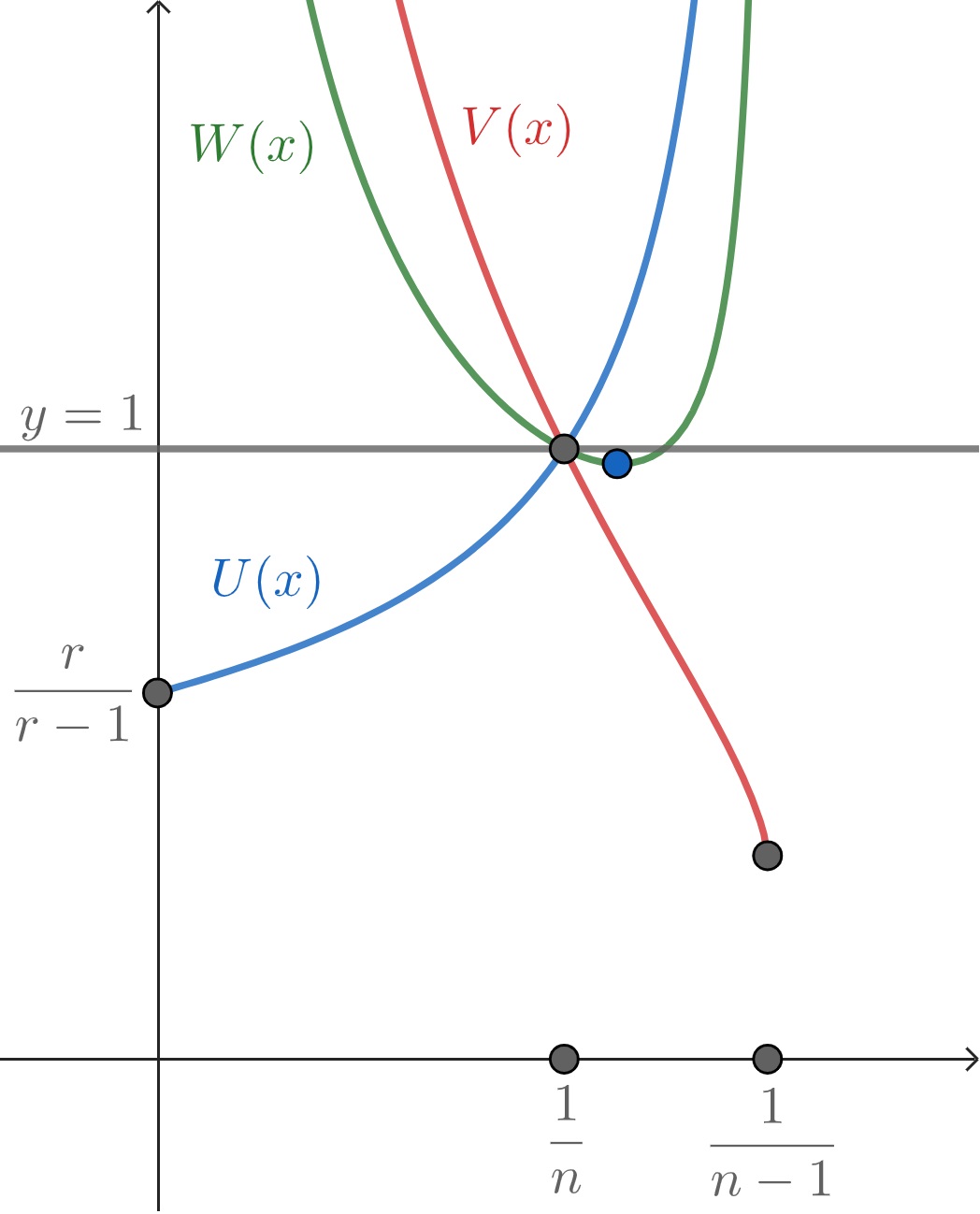}\ \includegraphics[scale=.4]{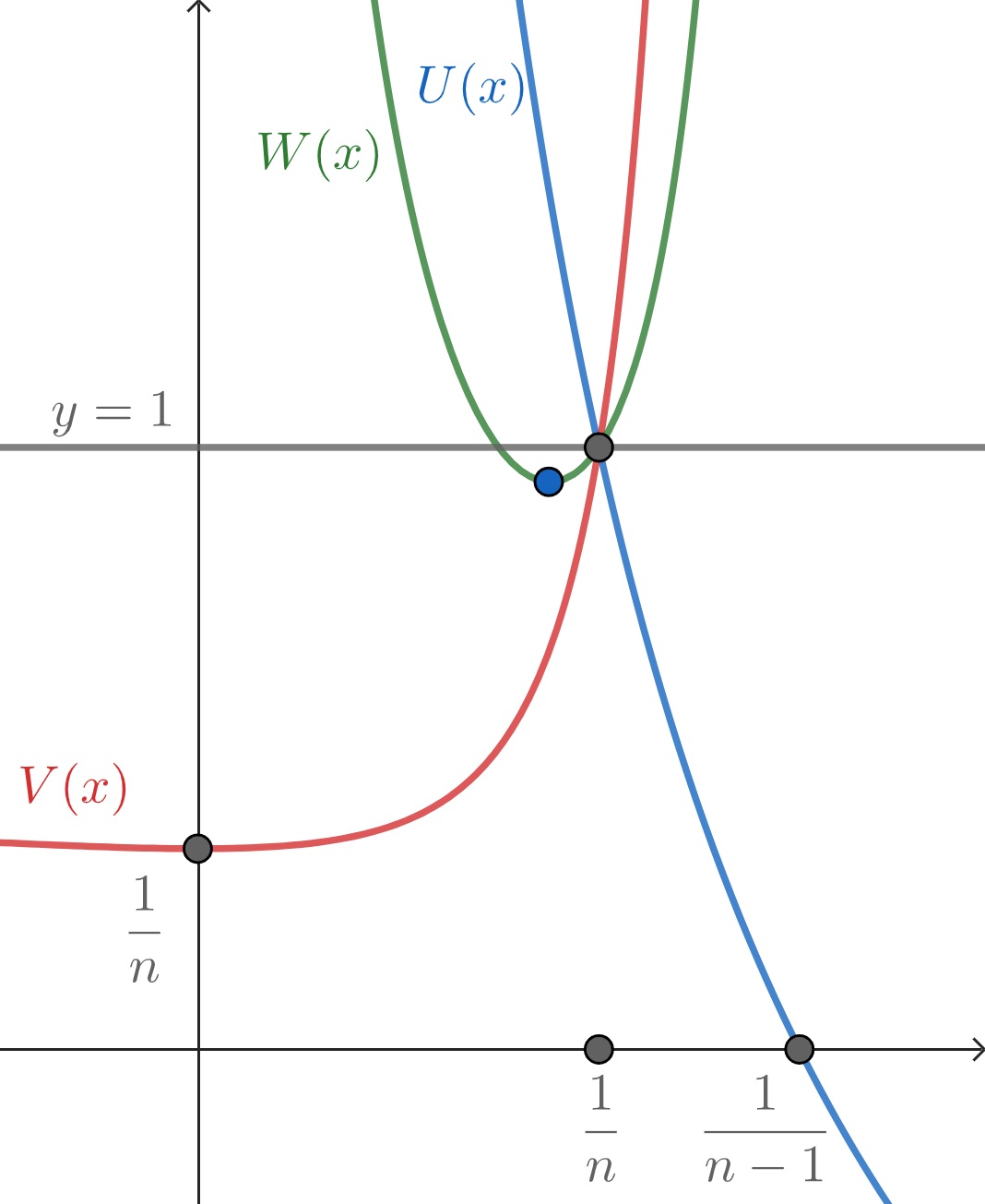}}
\centerline{\includegraphics[scale=.4]{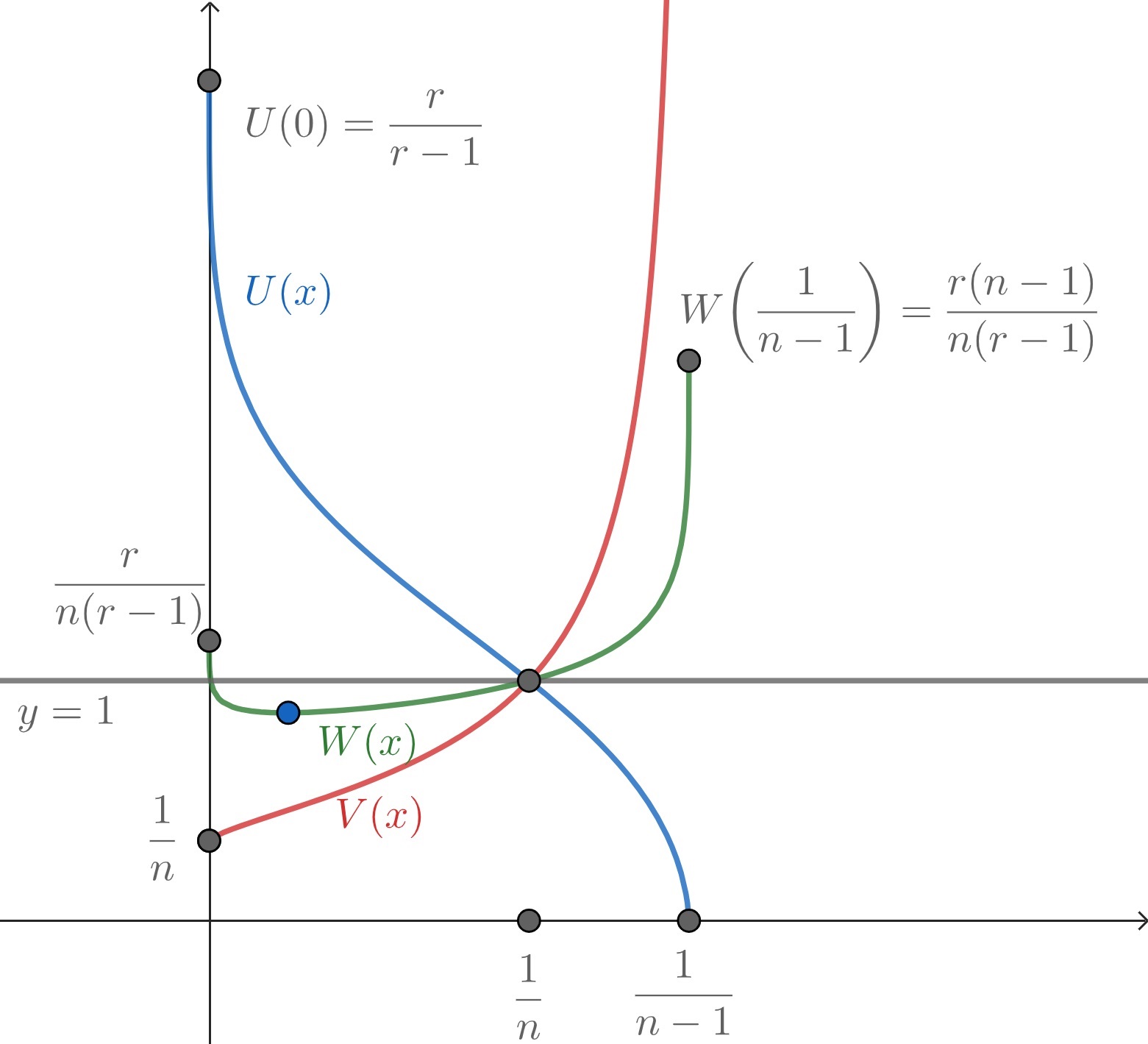}\ \includegraphics[scale=.4]{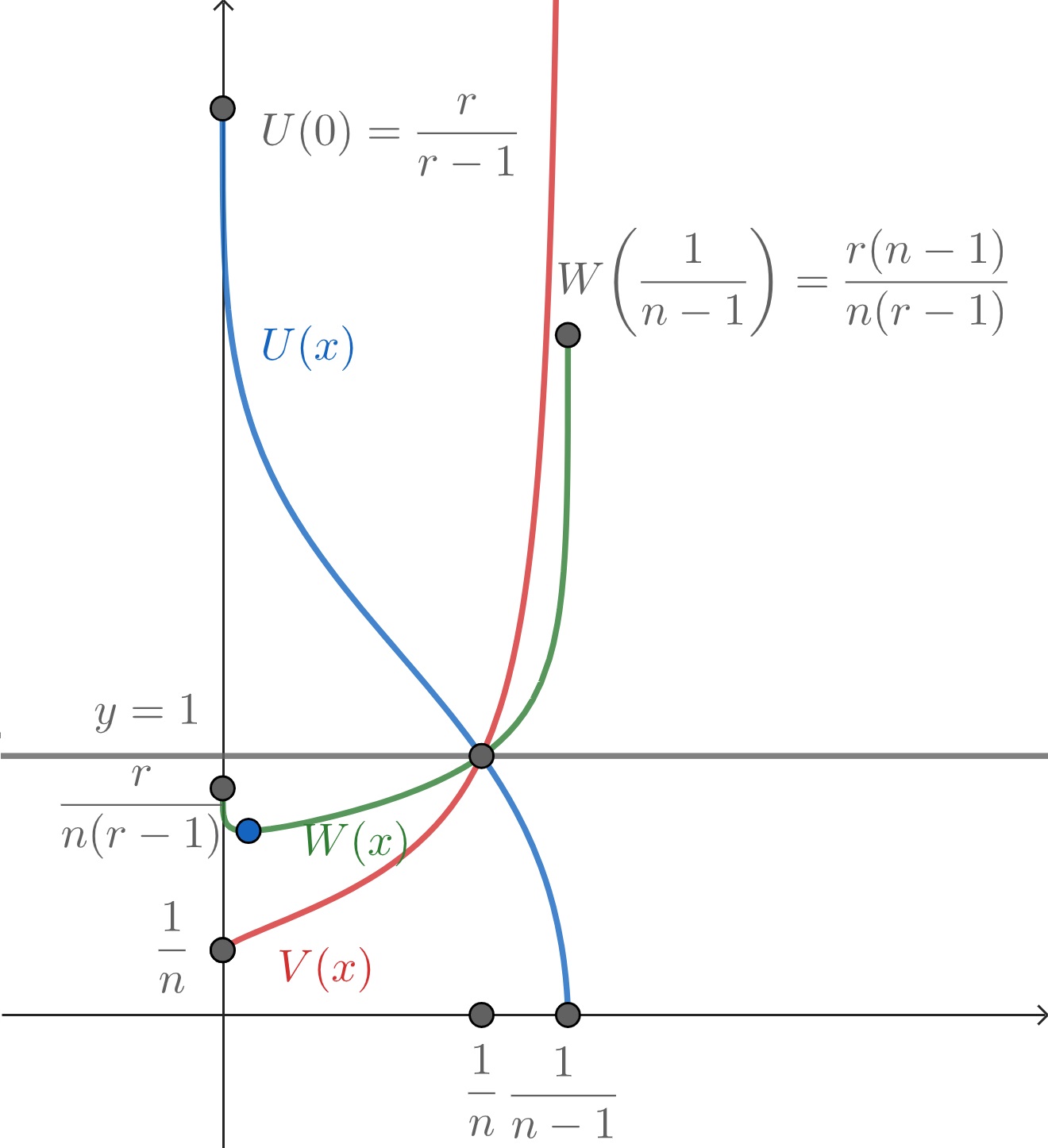}}
\centerline{\includegraphics[scale=.4]{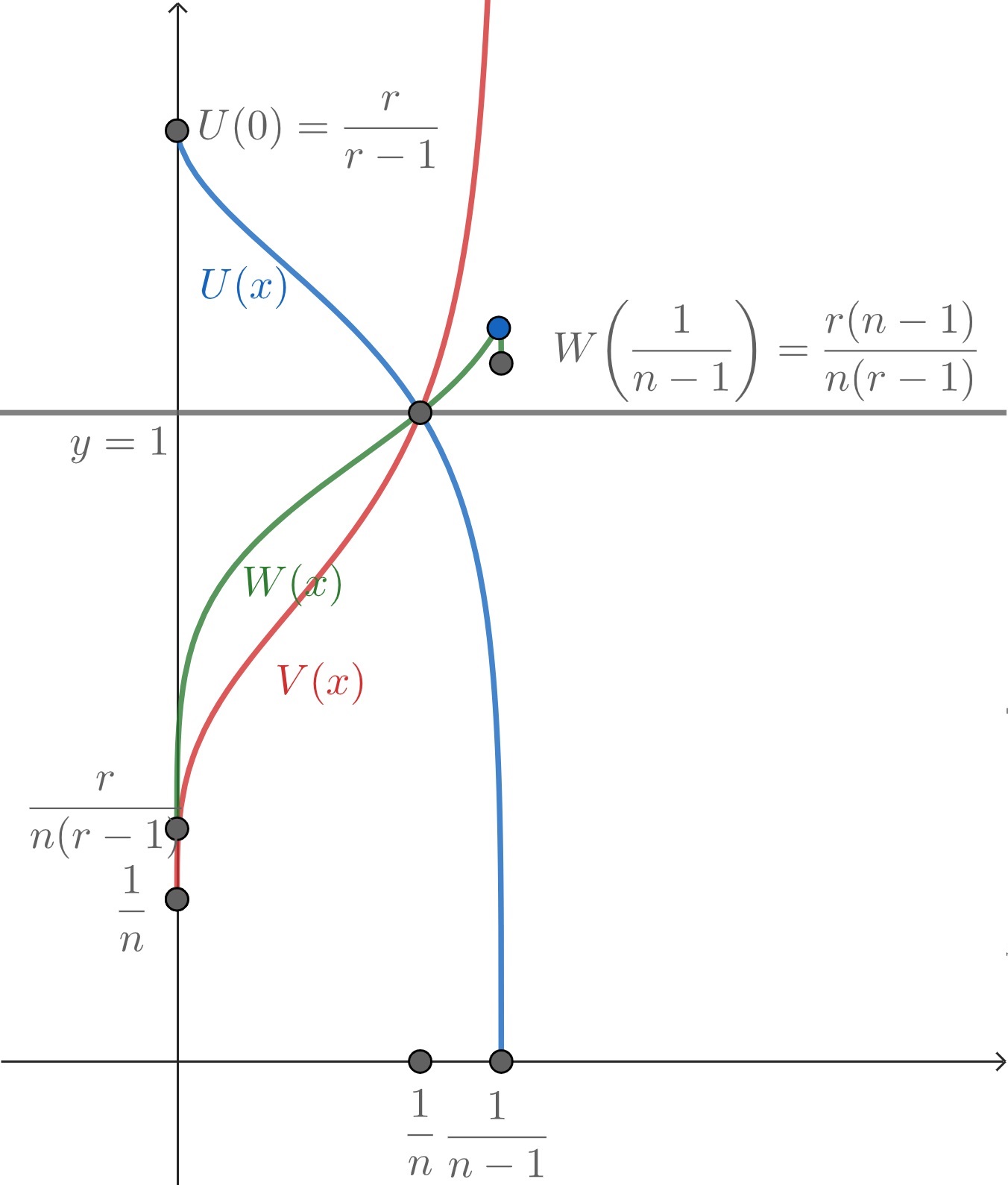}\ \includegraphics[scale=.4]{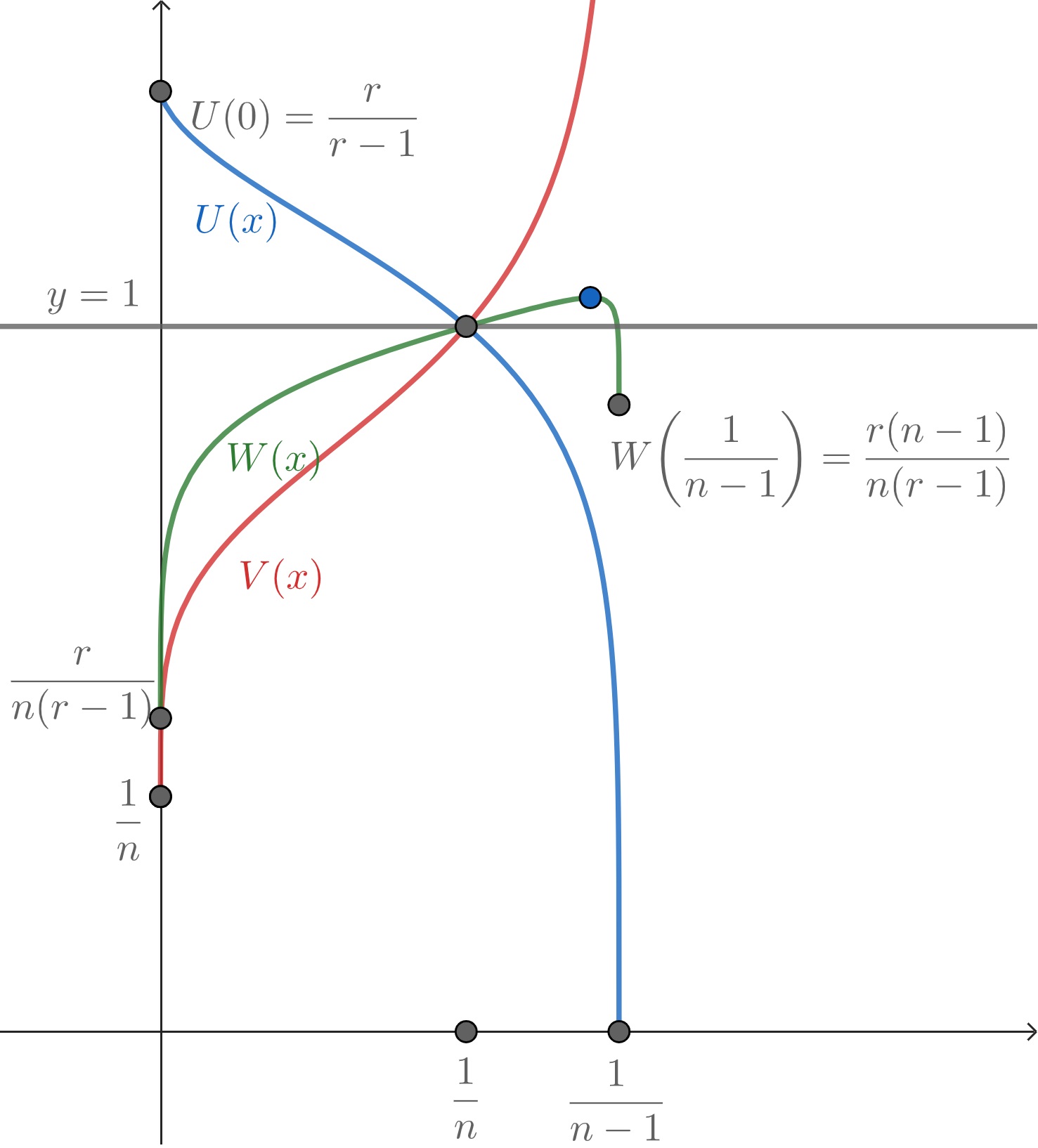}}
\label{fig3}
\caption{The graphs of $y=1$ (black), $U(x)$ (blue), $V(x)$ (red), and $W(x)$ (green) in interval $\left(0,\frac{1}{n-1}\right)$. Top left: $r<0$; Top right: $0<r<1$;\\
Middle left: $1<r<2$ and $n<\frac{r}{r-1}$; Middle right: $1<r\le2$ and $n\ge\frac{r}{r-1}$;\\
Bottom left: $2\le r$ and $n\ge r$; Bottom right: $2<r$ and $n<r$.\\
The blue point represents the extremum point of $W(x)$.}
\end{figure}

\noindent\textit{Monotonicity intervals of $\frac{g'(x)}{p'(x)}$.} We can now write
$$
\left(\frac{g'(x)}{p'(x)}\right)'=\left(\frac{g''(x)}{p''(x)}-\frac{g'(x)}{p'(x)}\right)\frac{p''(x)}{p'(x)}=\left(W(x)-1\right)\frac{p''(x)g'(x)}{(p'(x))^2}.
$$
We will consider each case separately. In each case we will use the sign of $p''(x)$ and the change of sign of $g'(x)$ at point $x=\frac{1}{n}$.
\begin{itemize}
\item If $r<0$, then $W(x)>1$ in intervals $\left(0,\frac{1}{n}\right)$ and $\left(\mu_1, \frac{1}{n-1}\right)$, and $W(x)<1$ in interval $\left(\frac{1}{n},\mu_1\right)$, where $\mu_1$ is point in interval $\left(\frac{1}{n},\frac{1}{n-1}\right)$, where $W(x)=1$ (see Figure 3, top left). Therefore function $\frac{g'(x)}{p'(x)}$ is decreasing in interval $\left(0,\mu_1\right)$ and increasing in $\left(\mu_1, \frac{1}{n-1}\right)$.

\item If $0<r<1$, then $W(x)>1$ in intervals $\left(0,\mu_2\right)$ and $\left(\frac{1}{n}, \frac{1}{n-1}\right)$, and $W(x)<1$ in interval $\left(\mu_2,\frac{1}{n}\right)$, where $\mu_2$ is point in interval $\left(0,\frac{1}{n}\right)$, where $W(x)=1$ (see Figure 3, top right). Therefore function $\frac{g'(x)}{p'(x)}$ is increasing in interval $\left(0,\mu_2\right)$ and decreasing in $\left(\mu_2, \frac{1}{n-1}\right)$.

\item If $1<r<2$ and $n<\frac{r}{r-1}$, then $W(x)>1$ in intervals $\left(0,\mu_3\right)$ and $\left(\frac{1}{n}, \frac{1}{n-1}\right)$, and $W(x)<1$ in interval $\left(\mu_3,\frac{1}{n}\right)$, where $\mu_3$ is point in interval $\left(0,\frac{1}{n}\right)$, where $W(x)=1$ (see Figure 3, middle left). Therefore function $\frac{g'(x)}{p'(x)}$ is decreasing in interval $\left(0,\mu_3\right)$ and increasing in $\left(\mu_3, \frac{1}{n-1}\right)$.

\item If $1<r\le2$ and $n\ge\frac{r}{r-1}$, then $W(x)<1$ in interval $\left(0,\frac{1}{n}\right)$, and $W(x)>1$ in interval $\left(\frac{1}{n},\frac{1}{n-1}\right)$ (see Figure 3, middle right). Therefore function $\frac{g'(x)}{p'(x)}$ is increasing in $\left(0, \frac{1}{n-1}\right)$.

\item If $2\le r$ and $n\ge r$, then $W(x)<1$ in intervals $\left(0,\frac{1}{n}\right)$, and $W(x)>1$ in interval $\left(\frac{1}{n},\frac{1}{n-1}\right)$ (see Figure 3, bottom left). Therefore function $\frac{g'(x)}{p'(x)}$ is increasing in $\left(0, \frac{1}{n-1}\right)$.

\item If $2<r$ and $n<r$, then $W(x)<1$ in intervals $\left(0,\frac{1}{n}\right)$ and $\left(\mu_4, \frac{1}{n-1}\right)$, and $W(x)>1$ in interval $\left(\frac{1}{n},\mu_4\right)$, where $\mu_4$ is point in interval $\left(\frac{1}{n},\frac{1}{n-1}\right)$, where $W(x)=1$ (see Figure 3, bottom right). Therefore function $\frac{g'(x)}{p'(x)}$ is increasing in interval $\left(0,\mu_4\right)$ and decreasing in $\left(\mu_4, \frac{1}{n-1}\right)$.

\end{itemize}

\noindent\textit{Monotonicity intervals of $f(x)$.}
We can now return to the study of function $f(x)=\frac{g(x)-\frac{1}{n}}{p(x)-\frac{1}{n}}$ in interval $0\le x\le\frac{1}{n-1}$. We calculate
$$
f'(x)=\left(\frac{g(x)-\frac{1}{n}}{p(x)-\frac{1}{n}}\right)'=\left(\frac{g'(x)}{p'(x)}-\frac{g(x)-\frac{1}{n}}{p(x)-\frac{1}{n}}\right)\frac{p'(x)}{p(x)-\frac{1}{n}}.
$$

\noindent
\begin{table}
\begin{tabular}{ |c||c|c|p{2cm}|p{2cm}|p{1.5cm}|p{1.5cm}|  }

 \hline
 & $r<0$ & $0<r<1$& $1<r<2$, $n<\frac{r}{r-1}$  & $1<r\le2$, $n\ge\frac{r}{r-1}$ & $2\le r$, $n\ge r$ &  $2<r$, $n< r$ \\
 \hline
 $f'(0)$   & $-\infty$  & $+\infty$  & $-\infty$    &$+\infty$&   $+\infty$&   $+\infty$\\
 $f'\left(\frac{1}{n-1}\right)$ & $+\infty$  & $-\infty$  &   $+\infty$   & $+\infty$    &$+\infty$&$-\infty$ \\
 
\hline
\end{tabular}

\label{tab3}
\caption{The values of $f'(x)$ at points $0$ and $\frac{1}{n-1}$.}
\end{table}

Again, we will consider each case separately. In each case we use Lemma 2.4 with $b=\frac{1}{n}$ in interval $\left(0, \frac{1}{n}\right)$ or its smaller part $\left(\mu_i, \frac{1}{n}\right)$ for $i=2,3$, and $a=\frac{1}{n}$ in interval $\left(\frac{1}{n},\frac{1}{n-1}\right)$ or its smaller part $\left(\frac{1}{n},\mu_i\right)$ for $i=1,4$. In the remaining intervals, if there are any, we use Lemma 2.3. For a proper use of Lemma 2.3, it is necessary to know the sign of $f'(x)$ at endpoints $x=0$ and $x=\frac{1}{n-1}$ (see Table 3).

\begin{figure}[htbp]
\centerline{\includegraphics[scale=.4]{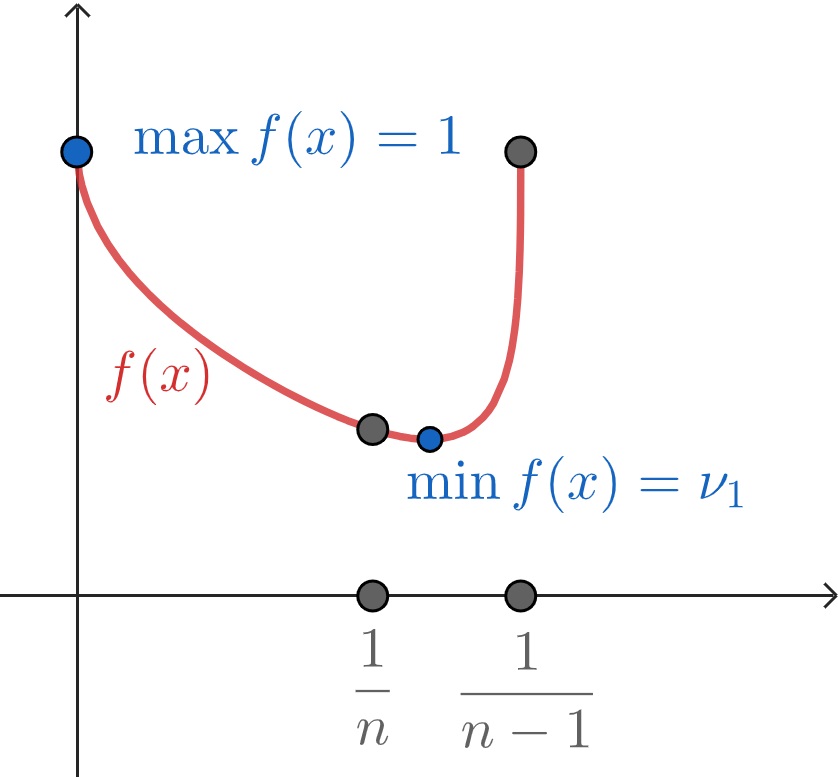}}
\centerline{\includegraphics[scale=.4]{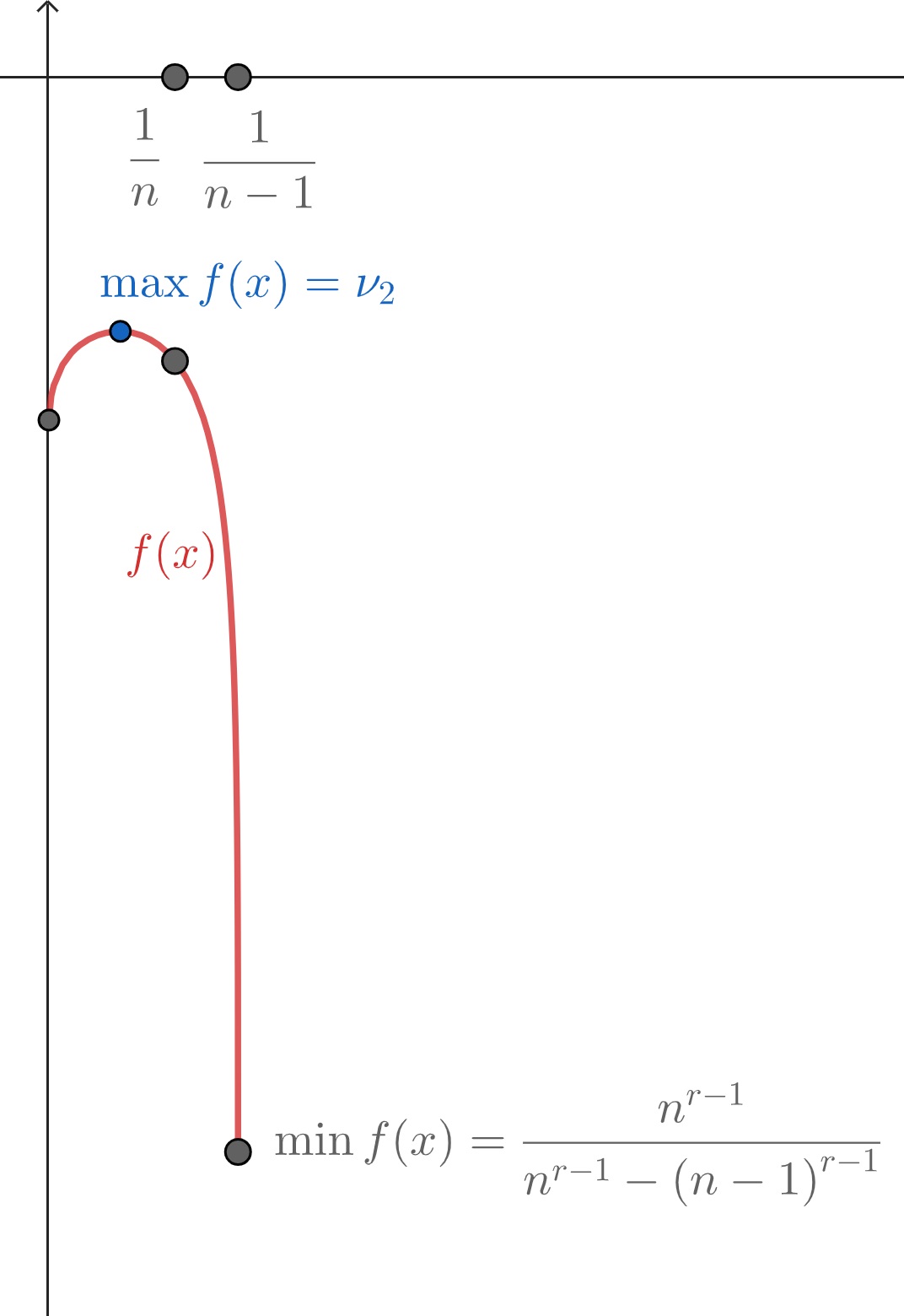}\ \includegraphics[scale=.4]{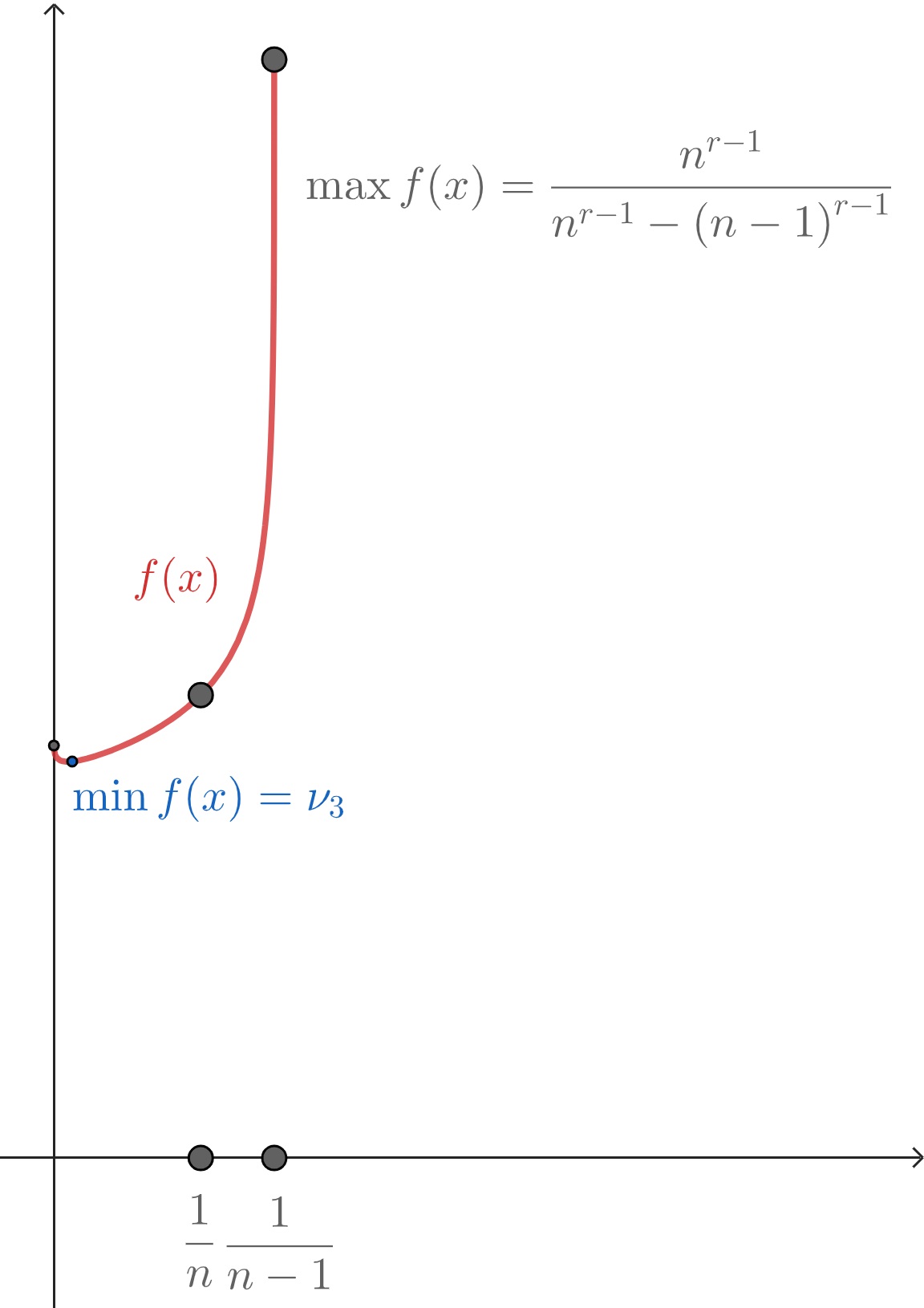}}
\centerline{\includegraphics[scale=.4]{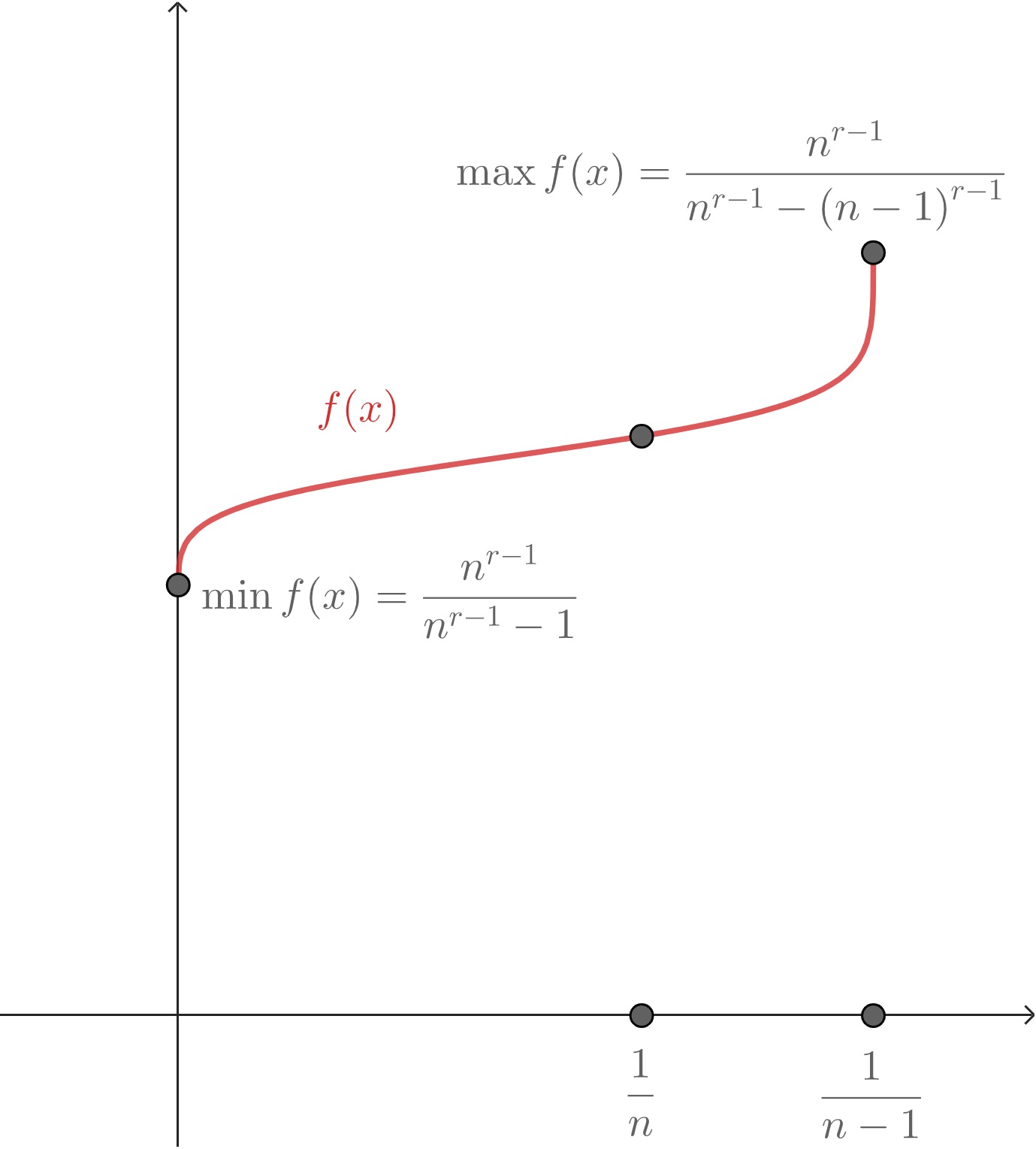}\ \includegraphics[scale=.4]{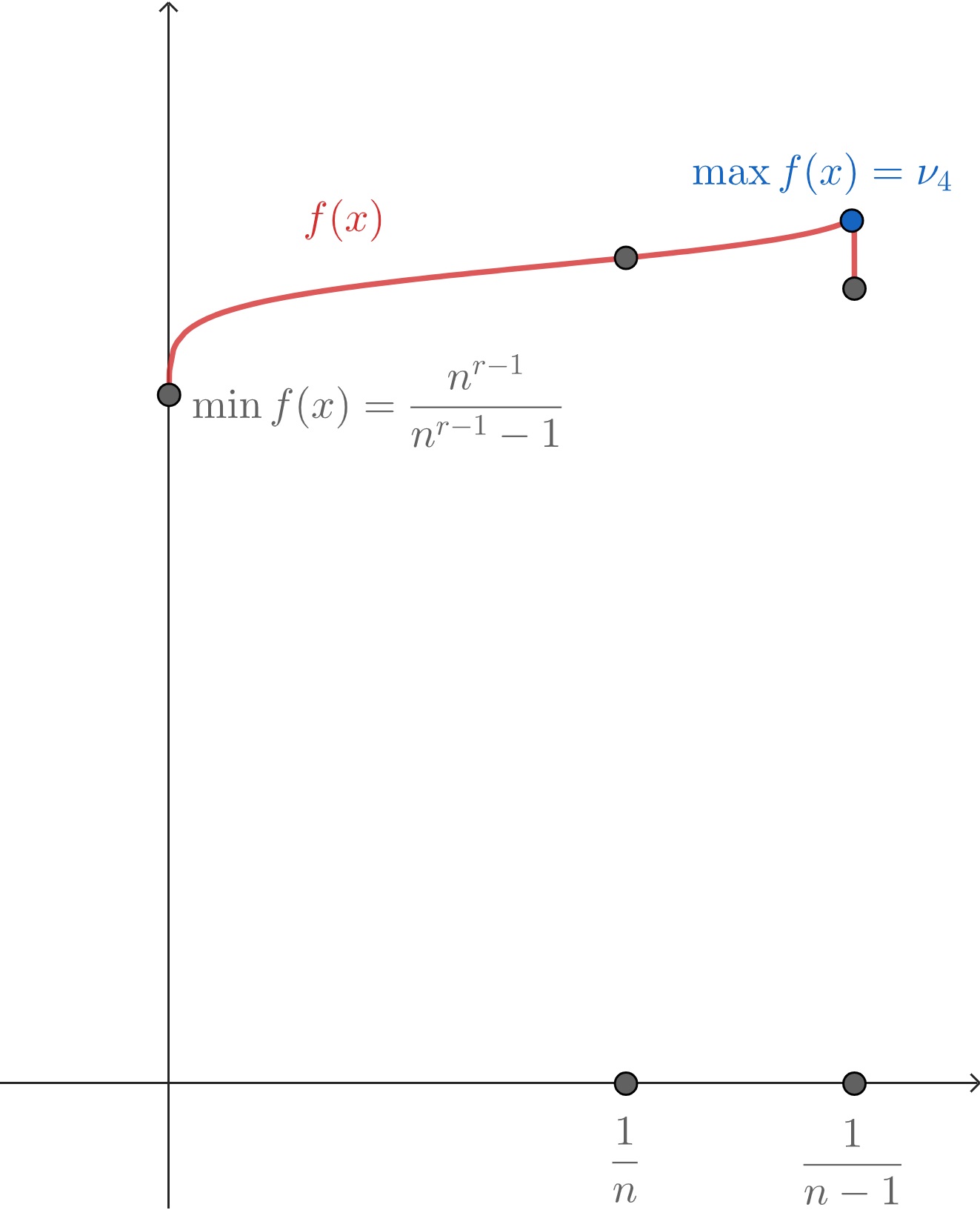}}
\label{fig4}
\caption{The graph of $f(x)$ (red), and its extreme values in interval $\left(0,\frac{1}{n-1}\right)$. Top: $r<0$;\\ 
Middle left: $0<r<1$; Middle right:  $1<r<2$ and $n<\frac{r}{r-1}$;\\Bottom left: $1<r\le2$ and $n\ge\frac{r}{r-1}$; and
$2\le r$ and $n\ge r$;\\ Bottom right: $2<r$ and $n<r$.}
\end{figure}

\begin{itemize}
\item If $r<0$, then function $f(x)=\frac{g(x)-\frac{1}{n}}{p(x)-\frac{1}{n}}$ is decreasing in interval $\left(0,\mu_1\right)$ from its maximum 1 at point $x=0$. In interval $\left(\mu_1, \frac{1}{n-1}\right)$ function $f(x)$ decreases to minimum $\nu_1$ and then increases reaching its maximum 1 at point $x=\frac{1}{n-1}$.

\item If $0<r<1$, then function $f(x)$ is decreasing in interval $\left(\mu_2, \frac{1}{n-1}\right)$ reaching its minimum $\frac{n^{r-1}}{n^{r-1}-(n-1)^{r-1}}$ at point $x=\frac{1}{n-1}$. In interval $\left(0,\mu_2\right)$ function $f(x)$ increases to maximum $\nu_2$ and then decreases.

\item If $1<r<2$ and $n<\frac{r}{r-1}$, then function $f(x)$ is increasing in interval $\left(\mu_3, \frac{1}{n-1}\right)$ reaching its maximum $\frac{n^{r-1}}{n^{r-1}-(n-1)^{r-1}}$ at point $x=\frac{1}{n-1}$. In interval $\left(0,\mu_3\right)$ function $f(x)$ decreases to minimum $\nu_3$ and then increases.

\item If $1<r\le2$ and $n\ge\frac{r}{r-1}$, or if $2\le r$ and $n\ge r$, then function $f(x)$ is increasing in $\left(0, \frac{1}{n-1}\right)$ from its minimum $\frac{n^{r-1}}{n^{r-1}-1}$ at point $x=0$ to its maximum $\frac{n^{r-1}}{n^{r-1}-(n-1)^{r-1}}$ at point $x=\frac{1}{n-1}$.

\item If $2\le r$ and $n<r$, then function $f(x)$ is increasing in interval $\left(0,\mu_4\right)$. In interval $\left(\mu_4, \frac{1}{n-1}\right)$ function $f(x)$ increases to maximum $\nu_4$ and then decreases.

\end{itemize}
The reason for the presence of these many cases can be explained informally with two pairs of close extrema points of $f(x)$, which can be in and out of interval $\left(0, \frac{1}{n-1}\right)$ depending on the choice of $n$ and $r$. In Figure 5, case $r=\frac{7}{5}$ and $n=3$ ($1<r<2$ and $n<\frac{r}{r-1}$) is shown.

\begin{figure}[htbp]
\centerline{\includegraphics[scale=.3]{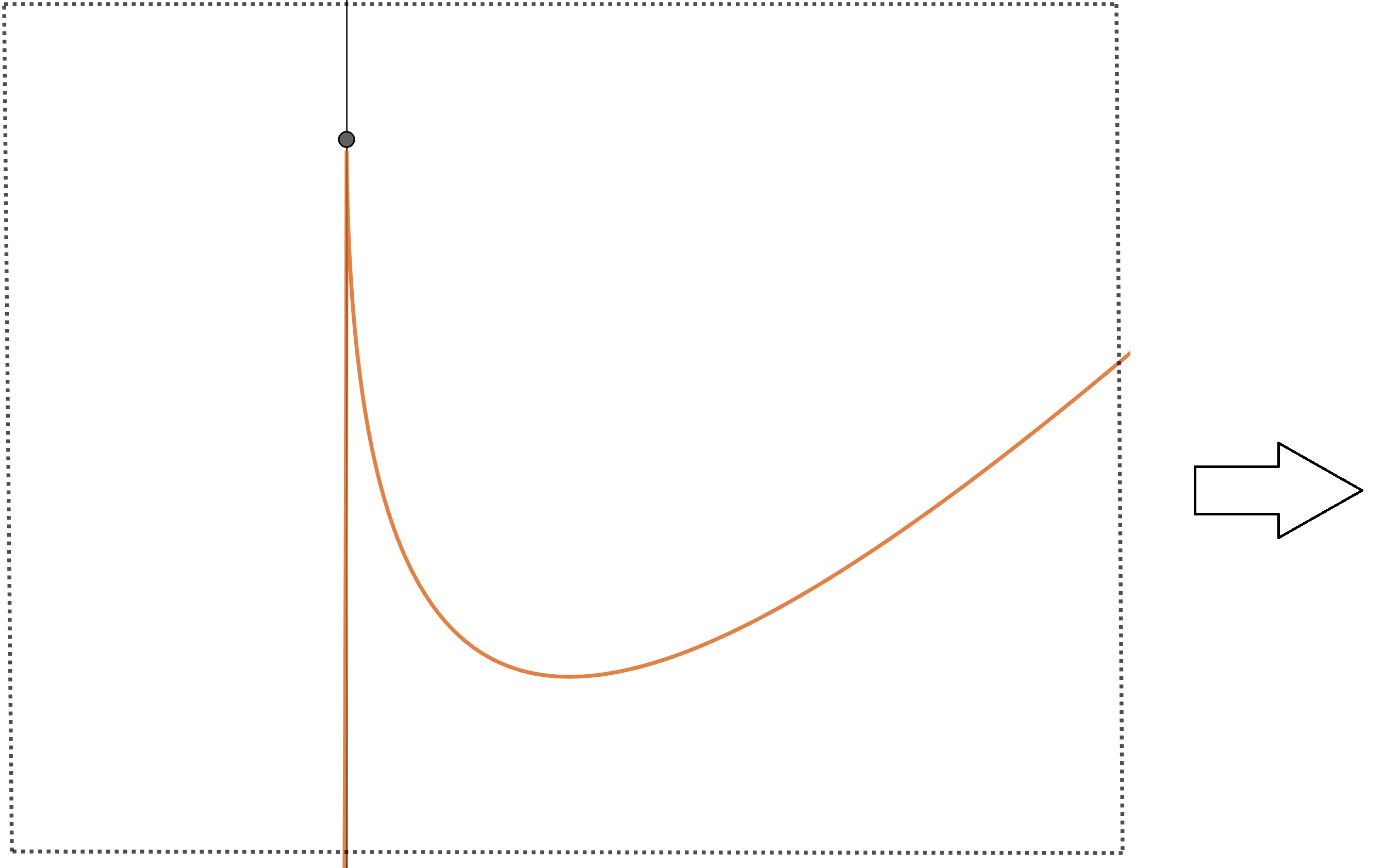}\includegraphics[scale=.4]{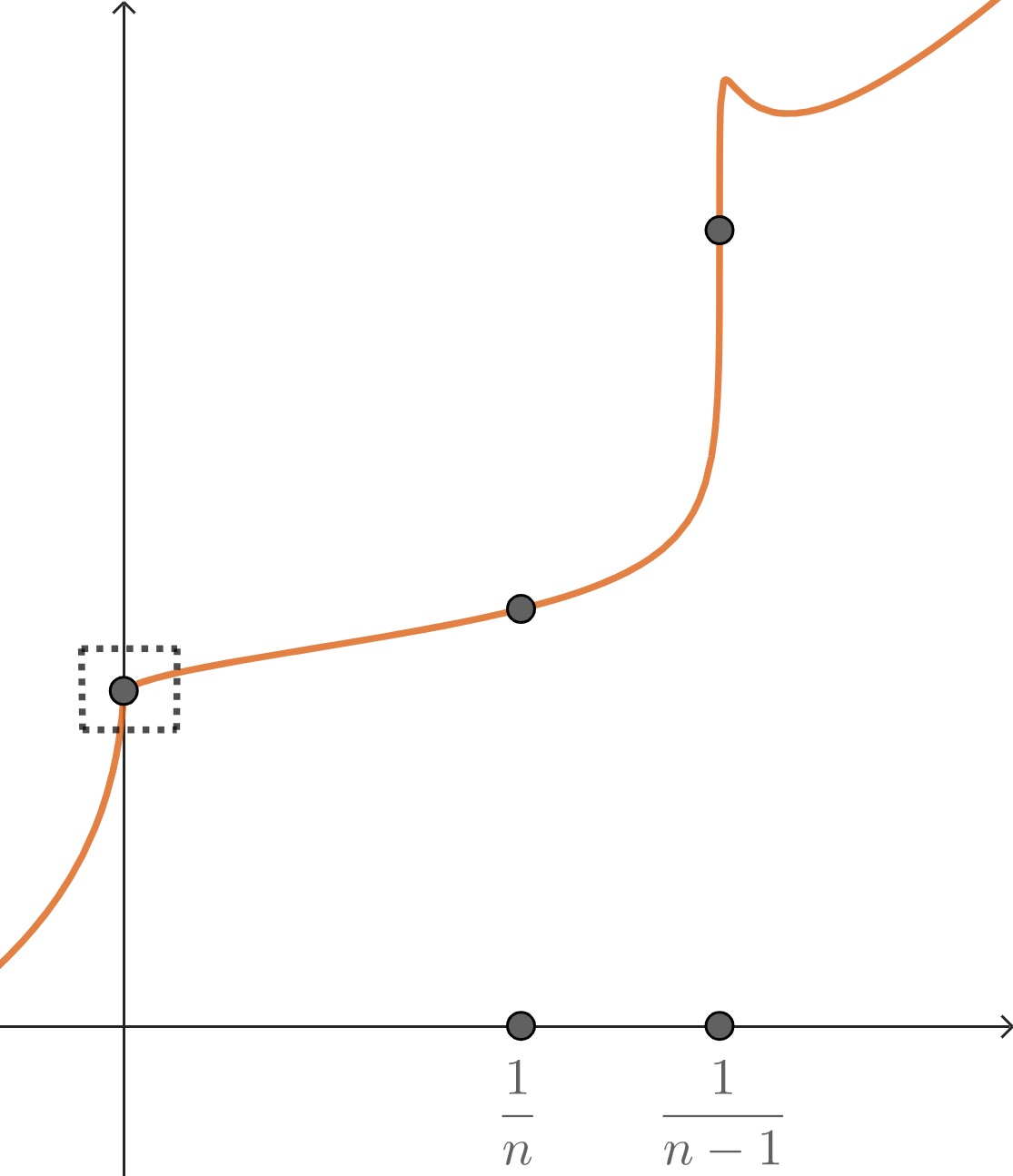}}
\label{fig5}
\caption{The graph of $f(x)$ can be extended beyond interval $0\le x\le\frac{1}{n-1}$ when, for example, $r=\frac{7}{5}$ and $n=3$. Note the two pairs of extrema points only one of which is in interval $\left(0, \frac{1}{n-1}\right)$ and can be seen when zoomed in significantly.}
\end{figure}

By using the fact that if $$x_{1}=x_{2}=\ldots =x_{n-1}=x,\ x_{n}=1-(n-1)x,$$ where $0\le x\le\frac{1}{n-1}$, then $\frac{A_n-G_n}{P_\alpha-G_n}=-\frac{1}{\frac{1}{f(x)}-1}$, and therefore using the results for $f(x)$ we can prove the bounds for $\frac{A_n-G_n}{P_\alpha-G_n}$ in the following theorem. It remains to consider the only remaining case.

\noindent \textbf{The case $x_1=0$.} If $\alpha<0$, then it is easy to check that $\lim_{x_1\rightarrow 0^+}\frac{A_n-G_n}{P_\alpha-G_n}=-\infty$. If $x_1=0$, then $G_n=0$, and therefore $\frac{A_n-G_n}{P_\alpha-G_n}=\frac{A_n}{P_\alpha}$. 
If $0<\alpha<1$ and $x_1=0$, then one can easily prove that $\left(\frac{n}{n-1}\right)^{\frac{1}{\alpha}-1}\le\frac{A_n}{P_\alpha}\le n^{\frac{1}{\alpha}-1}$. Similarly, if $\alpha>1$ and $x_1=0$, then one can prove that $n^{\frac{1}{\alpha}-1}\le\frac{A_n}{P_\alpha}\le \left(\frac{n}{n-1}\right)^{\frac{1}{\alpha}-1}$.

Thus we proved the following theorem which generalizes the results in \cite{wu0}, p. 646-647 and \cite{wen2}, p. 134 and its references.

\textbf{Theorem 1.} 
\begin{enumerate}

\item \textit{If $\alpha<0$, then $-\infty<\frac{A_n-G_n}{P_\alpha-G_n}\le\omega_1=\frac{\nu_1}{\nu_1-1}$, where $\nu_1$ is the minimum of $f(x)$ in interval $\left(\frac{1}{n}, \frac{1}{n-1}\right)$.}

\item  \textit{If $\alpha>1$, then $\omega_2=\frac{\nu_2}{\nu_2-1}\le\frac{A_n-G_n}{P_\alpha-G_n}\le\left(\frac{n}{n-1}\right)^{\frac{1}{\alpha}-1}$, where $\nu_2$ is the maximum of $f(x)$ in interval $\left(0,\frac{1}{n}\right)$.}

\item \textit{If $\frac{1}{2}<\alpha<1$ and $n<\frac{1}{1-\alpha}$, then $\left(\frac{n}{n-1}\right)^{\frac{1}{\alpha}-1}\le\frac{A_n-G_n}{P_\alpha-G_n}\le\omega_3=\frac{\nu_3}{\nu_3-1}$, where $\nu_3$ is the minimum of $f(x)$ in interval $\left(0,\frac{1}{n}\right)$.}

\item \textit{If $\frac{1}{2}\le\alpha<1$ and $n\ge\frac{1}{1-\alpha}$, or if  $0<\alpha\le\frac{1}{2}$ and $n\ge \frac{1}{\alpha}$, then $\left(\frac{n}{n-1}\right)^{\frac{1}{\alpha}-1}\le\frac{A_n-G_n}{P_\alpha-G_n}\le n^{\frac{1}{\alpha}-1}$.}

\item \textit{If $0<\alpha<\frac{1}{2}$ and $n<\frac{1}{\alpha}$, then $\omega_4=\frac{\nu_4}{\nu_4-1}\le\frac{A_n-G_n}{P_\alpha-G_n}\le n^{\frac{1}{\alpha}-1}$, where $\nu_4$ is the maximum of $f(x)$ in interval $\left(\frac{1}{n}, \frac{1}{n-1}\right)$.}

\end{enumerate}

The obtained results can also be interpreted as the best constant inequalities generalizing AM-GM inequality. For example, part 4 of Theorem above implies that if $\frac{1}{2}\le\alpha<1$ and $n\ge\frac{1}{1-\alpha}$, or if  $\alpha\le\frac{1}{2}$ and $n\ge \frac{1}{\alpha}$, then $\eta= n^{\frac{1}{\alpha}-1}$ and $\delta=\left(\frac{n}{n-1}\right)^{\frac{1}{\alpha}-1}$ are the best constants for inequalities
$$
\delta P_{\alpha}+(1-\delta)G_n\le A_n\le\eta P_{\alpha}+(1-\eta)G_n.
$$
These inequalities can be interpreted as sublinear and superlinear bounds for the arithmetic mean in terms of the power mean and the geometric mean. Note also that if all $x_i$ ($i=1,2,\ldots,n$) are replaced by $x_i^{\frac{1}{\alpha}}$, then the last double inequality can be written as
$$
\delta A_n^r +(1-\delta)G_n^r\le P_{r}^r\le\eta A_n^r+(1-\eta)G_n^r,
$$
where $r=\frac{1}{\alpha}$. Using the same substitutions $x_i\rightarrow x_i^{\frac{1}{\alpha}}$ ($i=1,2,\ldots,n$) we can see that the bounds in Theorem above are all true for $\frac{P_{r}^r-G_n^r}{A_n^r-G_n^r}$, too (cf. \cite{wu0}, p. 645, Corollary 4). If $r=n$, then we obtain the inequality in \cite{wu0} (Corollary 4, case $r=n$), for which an independent proof is given in \cite{aliyev2}. As a problem for further exploration, it would be interesting to find the extreme values of $\frac{P_\alpha-P_\beta}{P_\gamma-P_\delta}$ or more special $\frac{P_\alpha-P_\beta}{P_\gamma-P_\beta}$, for all possible values of the parameters. The answer to this question can possibly explain why $\alpha=\frac{1}{2}$ is special in the above theorem.

In \cite{wen2}, p. 135 (Theorem 2) it was proved that if $\alpha>1$, then $\omega_2\ge \frac{r}{n-1} \cdot\left(\frac{(2-r)(n-1)}{1+(1-r)(n-1)}\right)^{2-r}$, where $r=\frac{1}{\alpha}$. It seems that this bound works for $\alpha<0$ as well and possibly for some of the other cases, too. If $\omega_1\le \frac{r}{n-1} \cdot\left(\frac{(2-r)(n-1)}{1+(1-r)(n-1)}\right)^{2-r}$ is proved, then for $\alpha=-1$ it gives $\omega_1\le -{27} \cdot\frac{\left(n-1\right)^{2}}{\left(2n-1\right)^{3}}$, which can be substantial improvement for the bound $\omega_1\le -\frac{1}{n-1}$. The last inequality follows from the inequality $\frac{A_n-G_n}{A_n-H_n}>\frac{1}{n}$ proved by N. Lord in \cite{lord}. On the other hand $\omega_2,\omega_4\le \lim_{x\rightarrow\frac{1}{n}}\frac{\frac{1}{n}-g(x)}{p(x)-g(x)}=r$ and similarly $\omega_1,\omega_3\ge r$. It would be interesting to find sharper bounds for $\omega_i$ for each case of $\alpha$ (cf. \cite{aliyev1}, Theorem 1).

We will now study how the bounds in Theorem 1 change as $n$ increases. The monotonicity and convergence of $n^{\frac{1}{\alpha}-1}$ and $\left(\frac{n}{n-1}\right)^{\frac{1}{\alpha}-1}$ are straightforward so, we will focus on the monotonicity and the convergence of $\omega_1,\omega_2,\omega_3,$ and $\omega_4$.

\textbf{Theorem 2.} 
\begin{enumerate}

\item \textit{If $\alpha<0$, then $\omega_1$ increases as $n$ increases and $\lim_{n\rightarrow \infty}\omega_1=0$.}

\item  \textit{If $\alpha>1$, then $\omega_2$ decreases as $n$ increases and $\lim_{n\rightarrow \infty}\omega_2=0$.}

\item \textit{If $\frac{1}{2}<\alpha<1$, then $\omega_3$ increases as $n$ increases in interval $0<n<\frac{1}{1-\alpha}$.}

\item \textit{If $0<\alpha<\frac{1}{2}$, then $\omega_4$ decreases as $n$ increases in interval $0<n<\frac{1}{\alpha}$.}

\end{enumerate}

\textit{Proof.} Monotonicity: Let $h(x_1,\ldots,x_n)=\frac{A_n-G_n}{P_\alpha-G_n}$. It is sufficient to show that if $\alpha<1$ and $\alpha\ne 0$, then  $h(x_1,\ldots,x_n,G_n)\ge h(x_1,\ldots,x_n)$, and similarly, if $\alpha>1$, then $h(x_1,\ldots,x_n,G_n)\le h(x_1,\ldots,x_n)$ (cf. \cite{lord}). This is easily proved by noting
$$h(x_1,\ldots,x_n,G_n)=\frac{n(A_n-G_n)}{(n+1)\left(\left(\frac{nP_{\alpha}^{\alpha}+G_n^{\alpha}}{n+1}\right)^{\frac{1}{\alpha}}-G_n\right)}.$$
If $\alpha>1$, then the inequality $h(x_1,\ldots,x_n,G_n)\le h(x_1,\ldots,x_n)$ simplifies to
$$\left(\frac{nP_{\alpha}^{\alpha}+G_n^{\alpha}}{n+1}\right)^{\frac{1}{\alpha}}\ge\frac{nP_{\alpha}+G_n}{n+1},$$
which follows from the power mean inequality. The case $\alpha<1$ and $\alpha\ne 0$ is considered similarly.

Convergence: If $\alpha>1$, then $\omega_2\le n^{\frac{1}{\alpha}-1}$. Since in this case $\lim_{n\rightarrow \infty}n^{\frac{1}{\alpha}-1}=0$, $\lim_{n\rightarrow \infty}\omega_2=0$. If $-1\le\alpha<0$, then we can take $x_1=\cdots=x_{n-1}=1$ and $x_n=\frac{1}{n}$. Then $A_n=1-\frac{1}{n}+\frac{1}{n^2}$, $G_n=\frac{1}{\sqrt[n]{n}}$, and $P_{\alpha}=\left(\frac{n-1+\frac{1}{n^{\alpha}}}{n}\right)^{\frac{1}{\alpha}}$. In particular, $H_n=P_{-1}=\frac{n}{2n-1}$. Then $\lim_{n\rightarrow \infty}A_n=1$, $\lim_{n\rightarrow \infty}G_n=1$, $\lim_{n\rightarrow \infty}P_{\alpha}=1$ if $-1<\alpha<0$, and  $\lim_{n\rightarrow \infty}H_n=\frac{1}{2}$ ($\alpha=-1$). Therefore, if $-1\le\alpha<0$, then $\lim_{n\rightarrow \infty}\frac{A_n-G_n}{P_\alpha-G_n}=0$. Since $0\ge\omega_1\ge h\left(\underbrace{1,\ldots,1}_{n-1},\frac{1}{n}\right)$, we obtain $\lim_{n\rightarrow \infty}\omega_1=0$. If $\alpha<-1$, then $P_\alpha\le H_n$, and therefore 
$$0\ge\omega_1\ge\frac{A_n-G_n}{P_\alpha-G_n}\ge\frac{A_n-G_n}{H_n-G_n}.$$ It is already proved that for $n$-tuple $\left(\underbrace{1,\ldots,1}_{n-1},\frac{1}{n}\right)$ we have $\lim_{n\rightarrow \infty}\frac{A_n-G_n}{H_n-G_n}=0$. Consequently, in this case again $\lim_{n\rightarrow \infty}\omega_1=0$. Proof is complete.

\section{Conclusion} In the paper the best constants for the inequalities $C_1\le \frac{A_n-G_n}{P_\alpha-G_n}\le C_2$ are studied.  If these best constants exist, then their monotonicity and convergence properties are studied. The obtained results complete the known particular cases of the choice of $\alpha$, which were studied in earlier works.

\section*{Acknowledgments}
%I thank the reviewers for their constructive suggestions and comments.

\section{Declarations}
\textbf{Ethical Approval.}
Not applicable.
 \newline \textbf{Competing interests.}
None.
  \newline \textbf{Authors' contributions.} 
Not applicable.
  \newline \textbf{Funding.}
This work was completed with the support of ADA University Faculty Research and Development Fund.
  \newline \textbf{Availability of data and materials.}
Not applicable

\end{document}